\crefname{hypothesis}{Hypothesis}{Hypotheses}
\crefname{problem}{Problem}{Problem}
\title{Cholesky-like Preconditioner for Hodge Laplacians via Heavy Collapsible Subcomplex
\thanks{Submitted to the editors on DATE.}
}
\author{Anton Savostianov\thanks{Gran Sasso Science Institute, L'Aquila, Italy 
(\email{anton.savostianov@gssi.it})}
\and Francesco Tudisco\thanks{
Maxwell Institute \& School of Mathematics, University of Edinburgh, UK (\email{f.tudisco@ed.ac.uk})
}
\and Nicola Guglielmi\thanks{Gran Sasso Science Institute, L'Aquila, Italy 
(\email{nicola.guglielmi@gssi.it})}
}
\DeclareMathOperator{\diag}{diag} 
\DeclareMathOperator{\im}{im}
\DeclareMathOperator{\argmin}{arg\,min}
\newcommand*{\addFileDependency}[1]{
  \typeout{(#1)}
  \@addtofilelist{#1}
  \IfFileExists{#1}{}{\typeout{No file #1.}}
}
\newcommand*{\mc}[1]{ \mathcal{#1} }
\renewcommand*{\b}[1]{ \pmb{#1} }
\newcommand*{\ds}[1]{ \mathds{#1} }
\newcommand*{\V}[1]{ \mc V_{#1}( \mc K) }
\newcommand{\vn}{\varnothing}
\definecolor{lavender}{HTML}{7287fd}
\newcommand*{\Lu}[1]{L_{#1}^{\uparrow}}
\newcommand*{\Ld}[1]{L_{#1}^{\downarrow}}
\newcommand*{\wh}[1]{\widehat{#1}}
\pgfplotsset{compat=1.5}
\definecolor{bananamania}{rgb}{0.98, 0.91, 0.71}
\definecolor{burntsienna}{rgb}{0.91, 0.45, 0.32}
\definecolor{airforceblue}{rgb}{0.36, 0.54, 0.66}
\definecolor{liberty}{HTML}{5158BB}
\definecolor{junglegreen}{rgb}{0.16, 0.67, 0.53}
\definecolor{persimmon}{HTML}{DE5A02}
\pgfplotsset{%
    layers/standard/.define layer set={%
        background,axis background,axis grid,axis ticks,axis lines,axis tick labels,pre main,main,axis descriptions,axis foreground%
    }{
        grid style={/pgfplots/on layer=axis grid},%
        tick style={/pgfplots/on layer=axis ticks},%
        axis line style={/pgfplots/on layer=axis lines},%
        label style={/pgfplots/on layer=axis descriptions},%
        legend style={/pgfplots/on layer=axis descriptions},%
        title style={/pgfplots/on layer=axis descriptions},%
        colorbar style={/pgfplots/on layer=axis descriptions},%
        ticklabel style={/pgfplots/on layer=axis tick labels},%
        axis background@ style={/pgfplots/on layer=axis background},%
        3d box foreground style={/pgfplots/on layer=axis foreground},%
    },
}
\newenvironment{mt}{\begin{pmatrix}}{\end{pmatrix}}
\pgfplotsset{%
     layers/standard/.define layer set={%
         background,axis background,axis grid,axis ticks,axis lines,axis tick labels,pre main,main,axis descriptions,axis foreground%
     }{
         grid style={/pgfplots/on layer=axis grid},%
         tick style={/pgfplots/on layer=axis ticks},%
         axis line style={/pgfplots/on layer=axis lines},%
         label style={/pgfplots/on layer=axis descriptions},%
         legend style={/pgfplots/on layer=axis descriptions},%
         title style={/pgfplots/on layer=axis descriptions},%
         colorbar style={/pgfplots/on layer=axis descriptions},%
                  ticklabel style={/pgfplots/on layer=axis tick labels},%
                  axis background@ style={/pgfplots/on layer=axis background},%
                  3d box foreground style={/pgfplots/on layer=axis foreground},%
              },
          }
\newcommand{\algname}{\texttt{HeCS}}
\begin{document}

\maketitle

\begin{abstract} 
Techniques based on \( k \)-th order Hodge Laplacian operators \( L_k \) are widely used to describe the topology as well as the governing dynamics of high-order systems modeled as simplicial complexes.  In all of them, it is required to solve a number of least square problems with  \( L_k \) as coefficient matrix, for example in order to compute some portions of the spectrum or integrate the dynamical system. In this work, we introduce the notion of optimal collapsible subcomplex and we present a fast combinatorial algorithm for the computation of a sparse Cholesky-like preconditioner for $L_k$ that exploits the topological structure of the simplicial complex. The performance of the preconditioner is tested for conjugate gradient method for least square problems (CGLS) on a variety of simplicial complexes with different dimensions and edge densities. We show that, for sparse simplicial complexes, the new preconditioner reduces significantly the condition number of $L_k$ and performs better than the standard incomplete Cholesky factorization.
\end{abstract}

\begin{keywords}
      simplicial complex, Hodge Laplacian, graph Laplacian, Cholesky preconditioner, Gauss elimination, collapsible simplicial complex
\end{keywords}

\begin{MSCcodes}
      65F08, 
      05C50, 
      57M15, 
      62R40 
\end{MSCcodes}

\section{Introduction}
\label{sec:intro}
Graph models of network systems are ubiquitous throughout the sciences. However,  a fast-growing line of work in modern network science has highlighted the importance of higher-order models that take directly into account multi-agent interactions rather than classical graph pairwise interactions. 
Simplicial complexes are a popular higher-order generalization of graphs that allows modeling non-diadic interactions by means of a sequence of nested nodal simplicies. Given the nested structure of simplicial complexes, boundary operators \( B_k \), mapping each simplex to its border, and the associated higher-order Laplacian operators \( L_k \), \cite{Lim15}, provide a fundamental description of the topology of the system. For example, one may show the convergence of \( L_k \) in the thermodynamic limit to  Hodge Laplacians on manifolds, \cite{chen2021decomposition,chen2021helmholtzian}; the elements of the kernel of the higher-order Laplacian operators correspond to \(k\)-dimensional holes in the complex (connected components, one-dimensional holes, voids, etc.); the positive part of the spectrum of \( L_k \) can be used to measure the topological stability of the system, \cite{guglielmi2023quantifying}; the corresponding eigenvectors provide information about higher-order topological features \cite{ebli2019notion,grande2023disentangling} and can be used to describe the underlying dynamics, e.g. in case of synchronization or higher-order diffusion, \cite{gambuzza2021stability, torres2020simplicial}. 
Computing (part of) the spectrum of $L_k$ with standard iterative methods heavily requires efficient solving a number of linear systems of the form \( L_k \b x = \b f \), see e.g.\ \cite{demmel1997}. Moreover, the higher-order system dynamics, for example in the case of random walks, PageRank, and opinion spreading, \cite{schaub2019random, schaub2022signal}, are primarily defined in terms of the same type of linear systems. While it was observed in \cite{cohen2014solving} that for specifically structured simplicial complexes one can efficiently solve the linear system \( L_k \b x = \b f \) directly, e.g. when the complex can be embedded in a triangulation of a unit sphere in three dimensions, attempting to extend that approach to arbitrary complex significantly increases the complexity of the direct solver, \cite{black2022computational}. 

In order to speed up the solution of linear systems with higher-order Laplacians via iterative methods, in this work we propose a sparse and computationally efficient Choelsky-like preconditioner for $L_k$ based on the notion of weak collapsible complex. 
In the case of classical Laplacian \( L_0 \), one can build a sparse stochastic Cholesky preconditioner based on the underlying graph structure, \cite{Kyng2016}. By investigating the structure of the Schur complements forming the Cholesky factorization method, we observe that the same construction transfers to the higher-order setting only for simplicial complexes with a specific weakly collapsible topological structure \cite{whitehead1939simplicial}. As a result, we propose an algorithm that builds a fast lower-triangular preconditioner based on the optimal collapsible subcomplex. While we present the problem and the main ideas in the general setting, we will focus on the case $k=1$ when designing the algorithm and testing its efficiency. 

The rest of the paper is organized as follows: we introduce formal definitions for simplicial complexes and \( k\)-Laplacian operators in \Cref{sec:simplicial_complex}; the transition from the linear system \( L_k \b x = \b f\) to the least square problems on up-Laplacians \(\Lu k\) is demonstrated in \Cref{sec:linear_systems}, \Cref{prop:LS_decomp}. \Cref{sec:cholesky} describes the Cholesky preconditioner and the relation to the weak collapsibility of the simplicial complex, \Cref{sec:collapse}. Given that, we introduce the heavy collapsible subcomplex preconditioner \algname in \Cref{sec:heavy}, based on the theoretically optimal subcomplex preconditioner introduced in \Cref{lemma:subsample_weight,thm:cond_weight}.
Finally, \Cref{sec:experiments} provides numerical results for \algname{} in terms of condition number and performance of the preconditioned conjugate gradient method, as compared to the unpreconditioned system and the standard incomplete Cholesky factorization.

\section{ Simplicial complex and Homology groups }
\label{sec:simplicial_complex}

A \textit{simplicial complex} \( \mc K \) on the vertices \( \{ v_1, v_2 \ldots v_n \} \) is a collection of simplices \( \sigma \), sets of nodes with the property that all the subsets of $\sigma$ are simplicies of $\mc K$ too.   
We refer to a simplex made out of $k$ nodes \( \sigma = [ v_{i_1}, \ldots v_{i_{k+1}} ] \) as being of order \( k \), and write \( \dim \sigma = k \); the set of all the simplices of order \( k \) in the complex \( \mc K \) is denoted by \( \V k \). Thus, \( \V 0 \) are the vertices of $\mc K$, \( \V 1 \) are edges between pairs of vertices, \( \V 2 \) triangles connecting three vertices, and so on. We let \( m_k = | \V k | \) denote the cardinality of \( \V k\).

Each set of simplices \( \V k = \left\{ \sigma_1, \dots \sigma_{ m_k } \right\} \) induces a linear space of formal sums over the simplicies \( C_k (\mc K) = \left\{  \sum_{i=1}^{ m_k } \alpha_i \sigma_i  \mid \alpha_i \in \ds R \right\} \) referred to as \emph{chain space}; in particular,  \( C_0 ( \mc K ) \) is known as the space of vertex states and \( C_1 ( \mc K ) \) as the space of edge flows. Simplices of different orders are related through the boundaries operators \( \partial_k \) mapping the simplex to its boundary; formally, \( \partial_k : C_k ( \mc K ) \mapsto C_{k-1} ( \mc K ) \) is defined through the alternating sum:
\begin{equation*}
      \partial_k [ v_1, v_2, \ldots v_k ] = \sum_{i=1}^k (-1)^{i-1} [ v_1, \ldots v_{i-1}, v_{i+1}, \ldots v_k ] 
\end{equation*} 
By fixing an ordering for \( \V k \) we can fix a canonical basis for \( C_k(\mc K)\) and represent each boundary operator as a matrix  \( B_k \in \mathrm{Mat}_{ m_{k-1} \times m_k } \) with exactly $k$ nonzero entries per each column, being either $+1$ or $-1$. For these matrices, the fundamental property of topology holds: \emph{the boundary of the boundary is zero}, {\cite[Thm.~5.7]{Lim15}}:
\begin{equation}
      \label{eq:bkbk1}
      B_k B_{k+1} = 0 
\end{equation}
The matrix representation $B_k$ of the boundary operator \( \partial_k \) requires fixing an ordering of the simplices in $\V k$ and $\V{k-1}$. As it will be particularly relevant for the purpose of this work, we emphasize that we order triangles and edges as follows: triangles in \( \V 2 \) are oriented in such a way that the first edge (in terms of the ordering of \( \V 1 \)) in each triangle is positively acted upon by $B_2$, i.e.\ the first non-zero entry in each column of \(B_2 \) is \( +1 \), \Cref{fig:orientation}. 

\begin{figure}[hbtp]
      \centering
      \includegraphics[width=1.0\columnwidth]{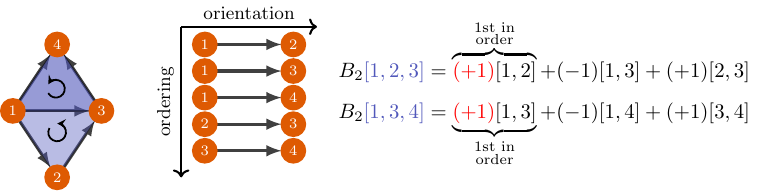}
      \caption{ Example of the simplicial complex with ordering and orientation: nodes from \( \V 0 \) in orange, triangles from \( \V 2 \) in blue. Orientation of edges and triangles is shown by arrows; the action of \( B_2 \) operator is given for both triangles.\label{fig:orientation}}
\end{figure}

The following definitions introduce the fundamental concepts of $k$-th homology group and $k$-th order Laplacian.  See \cite{Lim15} e.g.\ for more details. 

\begin{definition}[Homology group and higher-order Laplacian]
      Since \( \im B_{k+1} \subset \ker B_k \), the quotient space \( \mc H_k =  \sfrac{ \ker B_k }{ \im B_{k+1}} \), known as \( k\)-th homology group, is correctly defined and the following isomorphisms hold 
    \begin{equation*}
            \mc H_k \cong \ker B_k \cap \ker B_{k+1}^\top \cong \ker \left( B_k^\top B_k + B_{k+1} B_{k+1}^\top \right).
      \end{equation*}

      The matrix \( L_k = B_k^\top B_k + B_{k+1} B_{k+1}^\top \) is called the \(k\)-th order \emph{graph Laplacian}; the two terms \( \Ld k =  B_k^\top B_k \) and \( \Lu k = B_{k+1} B_{k+1}^\top \) are referred to as the \emph{down-Laplacian} and the \emph{up-Laplacian}, respectively.
\end{definition}

The homology group \( \mc H_k \) describes the \(k\)-th topology of the simplicial complex \( \mc K \): \( \beta_k = \dim \mc H_k = \dim \ker L_k \)  coincides exactly with the number of \(k\)-dimensional holes in the complex, known as the \emph{ \(k\)-th Betti number}. In the case \( k = 0 \), the operator \( L_0 = \Lu 0\) is exactly the classical graph Laplacian whose kernel corresponds to the \emph{connected components} of the graph, while \( \Ld 0  = 0 \). For \( k = 1 \) and \( k = 2\), the elements of \( \ker L_1 \) and \( \ker L_2\) describe the simplex 1-dimensional holes and voids respectively, and are frequently used in the analysis of trajectory data,~\cite{schaub2019random,benson2016higher}.

Although more frequently found in their purely combinatorial form, the definitions of simplicial complexes, homology groups, and higher-order Laplacians admit a generalization to the weighted case. For the sake of generality, in the rest of the work, we use the following notion of weighted boundary operators (and thus weighted simplicial complexes), as considered in e.g.~\cite{guglielmi2023quantifying}.

\begin{definition}[Weighted and normalised boundary matrices]
       For \emph{weight functions} \( w_k : \V k \mapsto \ds R_+ \cup \{ 0 \} \), define the diagonal weight matrix \( W_k \in \mathrm{Mat}_{ m_k \times m_k } \) as  \( (W_k)_{ii} = \sqrt{w_k(\sigma_i)}\). Then the  weighting scheme for the boundary operators upholding the Hodge algebras~\eqref{eq:bkbk1} is given by:
      \begin{equation}
            \label{eq:weighting}
            B_k \mapsto W_{k-1}^{-1} B_k W_k
      \end{equation}
\end{definition}

Note that, with the weighting scheme \Cref{eq:weighting}, the dimensionality of the homology group is preserved, \( \dim \ker L_k = \dim \ker \widehat L_k \),~\cite{guglielmi2023quantifying} as well as the fundamental property of topology \Cref{eq:bkbk1}. For the sake of simplicity, in the following we will omit writing the weighting matrices explicitly and will write $B_k$ to denote a generic possibly weighted boundary operator $B_k=W_{k-1}^{-1} B_k W_k$.

\section{Linear systems}\label{sec:linear_systems}
Note that \(k\)-th order Laplacians \( L_k \), \( k > 0 \), are matrices with a large number of zero entries since \( m_k \ll m_{k-1}^2\) asymptotically  (e.g. the number of triangles \( m_2 \) is always bounded by \(m_1^{3/2}\)). In particular, in analogy with the standard $k=0$ case, we say that the simplicial complex \( \mc K \) and the corresponding Laplacians \( L_k \) as sparse if  \( m_{k} = \mc O( m_{k-1}\log m_{k-1}) \),  that is the number of simplices of higher order is comparable up to a constant times the number of simplices of lower order times its logarithm. 
This is, for example, the case for structured simplicies such as trees for the classical Laplacian, or triangulations for \( L_1 \); similarly, one can show the existence of a sparser simplicial complex approximating the original one with the \( m_{k} = \mc O( m_{k-1}\log m_{k-1}) \) sparsity pattern, \cite{osting2017spectral, spielman2008graph}.

The spectrum of the \(k\)-th order Laplacian defines the \(k\)-th topology of the complex \( \mc K \). In most applications, one is actually interested in just a few nonzero eigenvalues, possibly in the leftmost or the rightmost part of the spectrum \cite{grande2023topological,guglielmi2023quantifying,ribandogros2023combinatorial}. 
For large and sparse 
simplicial complexes, the methods of choice for the computation of these eigenspaces are iterative eigensolvers that require solving a (possibly large) number of linear systems (least-squares problems) of the form \( L_k \b x = \b f \), for suitable $\b x,\b f \bot \ker L_k$.

The following elementary result shows that solving \( L_k \b x = \b f \) can be brought down to the solution of \( \Lu k \b x = \b f \). We provide the proof for completeness. 

\begin{theorem}[Joint \(k\)-Laplacian solver]
\label{prop:LS_decomp}
     The linear system \( L_k \b x = \b f \) can be reduced to a sequence of consecutive least square problems for isolated up-Laplacians. Precisely, \( \b x \) is a solution of system \eqref{eq:exact_system},
     \begin{equation}
            \label{eq:exact_system}
            L_k \b x = \b f \qquad \text{s.t.} \qquad 
              \b x, \b f \perp \ker L_k    
     \end{equation}
     if and only if it can be written as \( \b x = B_k^\top \b u + \b x_2 \), where:
     \begin{equation*}
     \!\wh{\b u} = \underset{\b z}{\argmin} \left\| \Lu {k-1} \b z - B_k \b f_1 \right\|, \quad \!\! \b u = \underset{\b z}{\argmin} \left\| \Lu {k-1} \b z - \wh{\b u} \right\|, \quad \!\!  \b x_2 = \underset{\b y}{\argmin} \left\| \Lu k \b y - \b f_2 \right\|
     \end{equation*}
     and \( \b f =  \b f_1 + \b f_2 \) with \( \b f_1 = B_k^\top \b z_1 \), \( \b z_1 = \underset{\b z}{\argmin} \left\|  \Lu {k-1} \b z - B_k \b f \right\| \).
\end{theorem}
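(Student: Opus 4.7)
The plan is to exploit the orthogonal Hodge decomposition
\[
C_k(\mc K) \;=\; \im B_k^\top \;\oplus\; \ker L_k \;\oplus\; \im B_{k+1},
\]
which follows from the fundamental identity $B_kB_{k+1}=0$ of~\eqref{eq:bkbk1}. Under this splitting, $\Ld k$ preserves $\im B_k^\top$ and annihilates the other two summands, and $\Lu k$ does the symmetric thing on $\im B_{k+1}$. Consequently $L_k=\Ld k+\Lu k$ acts block-diagonally, and the equation $L_k\b x=\b f$ decouples: writing $\b f=\b f_1+\b f_2$ and $\b x=\b x_1+\b x_2$ with $\b f_1,\b x_1\in\im B_k^\top$ and $\b f_2,\b x_2\in\im B_{k+1}$ (the harmonic components vanish thanks to $\b x,\b f\perp\ker L_k$) reduces the system to the two independent problems $\Ld k\b x_1=\b f_1$ and $\Lu k\b x_2=\b f_2$.

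Next I would verify that the $\b f_1$ produced in the statement really is the grad-component of $\b f$. Because $B_kB_{k+1}=0$ kills the curl part, $B_k\b f=B_kB_k^\top\b w\in\im\Lu{k-1}$ for any representative $\b w$ of the grad part, so the LS problem $\Lu{k-1}\b z=B_k\b f$ is consistent. Combined with the identity $\ker\Lu{k-1}=\ker B_k^\top$ — a direct consequence of $\Lu{k-1}=B_kB_k^\top$ and the standard $\ker A^\top A=\ker A$ fact — this shows that $B_k^\top\b z_1$ is unambiguously determined, regardless of which particular $\argmin$ is selected, and equals the orthogonal projection of $\b f$ onto $\im B_k^\top$. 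Complementarity then forces $\b f_2=\b f-\b f_1\in\im B_{k+1}$.

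Third, I would solve each decoupled piece. The curl equation is immediate: $\b f_2\in\im B_{k+1}=\im\Lu k$ makes $\min_{\b y}\|\Lu k\b y-\b f_2\|$ consistent, so any LS solution $\b x_2$ satisfies $\Lu k\b x_2=\b f_2$ exactly. For the grad equation I would parametrize $\b x_1=B_k^\top\b u$; then $\Ld k\b x_1=\b f_1=B_k^\top\b z_1$ rewrites as $B_k^\top(\Lu{k-1}\b u-\b z_1)=0$, i.e.\ $\Lu{k-1}\b u-\b z_1\in\ker B_k^\top=\ker\Lu{k-1}$. Left-multiplying by $\Lu{k-1}$ gives the normal equation $\Lu{k-1}^2\b u=\Lu{k-1}\b z_1=B_k\b f_1$, which factors exactly as the two nested LS problems of the statement: first $\wh{\b u}$ solves $\Lu{k-1}\b z=B_k\b f_1$, then $\b u$ solves $\Lu{k-1}\b z=\wh{\b u}$. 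A direct substitution confirms that the resulting $\b u$ obeys $\Ld k B_k^\top\b u=\b f_1$.

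The main technical obstacle is bookkeeping of kernels: each LS subproblem involves a rank-deficient symmetric PSD matrix, so its solution is determined only up to an element of $\ker\Lu{k-1}$, $\ker\Lu k$, or $\ker B_k^\top$. These ambiguities are precisely absorbed by the outer boundary operators through the coincidences $\ker\Lu{k-1}=\ker B_k^\top$ and $\ker\Lu k=\ker B_{k+1}^\top$, so the products $B_k^\top\b u$ and $\b x_2$ recover the unique grad and curl components. The converse direction of the iff — that any $\b x\perp\ker L_k$ satisfying $L_k\b x=\b f$ admits the stated representation — then follows from uniqueness of the Hodge decomposition.
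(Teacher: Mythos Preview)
Your proposal is correct and follows essentially the same route as the paper: both use the Hodge decomposition to decouple $L_k\b x=\b f$ into $\Ld k\b x_1=\b f_1$ and $\Lu k\b x_2=\b f_2$, recover $\b f_1$ via the LS problem $\Lu{k-1}\b z_1=B_k\b f$, and then reduce the down-part to $(\Lu{k-1})^2\b u=B_k\b f_1$ solved as two consecutive LS problems. Your treatment of the kernel ambiguities is in fact more explicit than the paper's, which leaves those details to the reader.
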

\begin{proof}
For the \(k\)-th order Hodge Laplacian \( L_k  \), it holds (see \cite{Lim15} e.g.):
      \begin{equation}
            \label{lem:hodge_decomp}
            \ds R^{m_{k-1}} = \lefteqn{\overbrace{\phantom{\im B_k^\top \oplus  \ker L_k}}^{\ker B_{k+1}^\top}} \im B_k^\top \oplus
            \underbrace{\ker L_k \oplus  \im B_{k+1}}_{\ker B_k}\,.
      \end{equation}
	Thus, \( \b x = \b x_1 + \b x_2 = B_k^\top \b u + B_{k+1} \b v \) for some \( \b u\), \( \b v\). Then, the system \( \Lu k \b x = \b f \) is equivalent to:
	\begin{equation*}
		\begin{cases}
			\Ld k B_k^\top \b u = B_k^\top B_k B_k^\top \b u = \b f_1 \\
			\Lu k B_{k+1} \b v = B_{k+1} B_{k+1}^\top B_{k+1} \b v = \b f_2 \\
			\b f_1 + \b f_2 = \b f
		\end{cases}
	\end{equation*}
	where \( \b f_1 \) and \( \b f_2 \) are the Hodge decomposition of the right hand side \( \b f \), with \( \b f_1 \in \im B_k^\top \) and \( \b f_2 \in \im B_{k+1} \). Then \( \b f = B_k^\top \b z_1 + B_{k+1} \b z_2 \) and, after multiplication by \( B_k \), \( B_k B_k^\top \b z_1 = \Lu {k-1} \b z_1 = B_k \b f  \iff \min_{\b z_1} \left\|  \Lu {k-1} \b z_1 - B_k \b f \right\|\) and \( \b f_2 = \b f -  B_k^\top \b z_1\).
	
	Finally, we note that  equation \( B_k^\top B_k B_k^\top \b u = \b f_1 \iff ( \Lu {k-1})^2 \b u = B_k \b f_1\)  can be solved by two consecutive LS-problems:
	\begin{equation*}
			\min_{ \wh{ \b u } } \left\| \Lu {k-1} \wh{ \b u } - B_k \b f_1 \right\|, \qquad 
			\min_{ { \b u } } \left\| \Lu {k-1} { \b u } - \wh{ \b u} \right\|
	\end{equation*}
	which corresponds to the solution of the down part of the original system. 
\end{proof}

 \Cref{prop:LS_decomp} reduces the problem of solving a linear system (least-squares problem) for the entire Laplacian matrix  \( L_k\) to the problem of solving a system for the up-Laplacian \( \Lu k \). Thus, in the next sections, we propose an efficient combinatorial Cholesky-like preconditioner for \(\Lu k\).

\section{Efficient solver for the up-Laplacian}\label{sec:cholesky}

When the up-Laplacian least-squ\-a\-re problem \( \min_{\b x} \| \Lu k \b x - \b f  \| \) is solved with standard fast iterative methods such as CGLS, \cite{bjorck1998stability,hestenes1952methods},  the convergence rate and overall behavior of the solver highly depend on the condition number of \( \Lu k \), defined as 
\[
\kappa ( \Lu k ) = \frac{ \sigma_{\max} (\Lu k) }{ \sigma_{\min} (\Lu k )}\, ,
\]
where \( \sigma_{\max}\) and \( \sigma_{\min}\) are the largest and smallest positive singular values of \( \Lu k\).

In order to reduce \( \kappa ( \Lu k ) \), we move from $\min_{\b x}\|\Lu k \b x -\b f\|$ to the symmetrically preconditioned system 
\begin{equation*}
     \min_{\b x}\|\left( C^+ \Lu k  ( C^{+})^\top \right) \left( C^\top \b x  \right) - C^+ \b f\|
\end{equation*}
 where $C^+$ denotes the Moore-Penrose pseudoinverse of $C$, and  $C$ is chosen so that 
(a) the transition between unconditioned and preconditioned systems is bijective,
(b) the matrix \( C^+ \Lu k (C^{+})^\top \) is better conditioned than the initial $\Lu k$, and (c) the pseudo-inverse of the preconditioner \( C \) can be efficiently implemented.
In particular, a cheap {\tt matvec} of the pseudo-inverse \( C^+ \b f \) is guaranteed in the case of \( C \) being a lower-triangular matrix. This is the case, for example, for the \emph{Cholesky} and the \emph{incomplete Cholesky} preconditioners \cite{golub2013matrix,higham1990analysis,manteuffel1980incomplete}.

Following this line of approach, a variety of techniques have been developed for preconditioning the Laplacian matrix \( \Lu 0 = L_0 \). In particular, one of the most prominent approaches is based on the celebrated spectral sparsification result by Spielman and Srivastava,~\cite{spielman2008graph}.  This result has been recently extended to the higher-order Laplacians,~\cite{osting2017spectral}. However, while the step from sparsification to preconditioner is well understood in the \( k = 0 \) case, when \( k > 0 \), efficiently computing an effective preconditioner from a sparsified simplicial complex is not trivial.

In the next subsection, we review how a sparse approximate Cholesky preconditioner is computed from the spectral sparsifier in the graph case. We then highlight the difficulties for \( k > 0 \) and will devote the rest of the paper to design of an efficient approach for \(k=1\).

\subsection{Schur Complements and Cholesky preconditioner for \( k = 0 \)}

We start with a useful observation about the structure of the Gaussian elimination process to form the Cholesky factor. Let \( A \) be a real symmetric positive definite matrix. Then its \emph{Schur complements} \( S_i \) obtained in the process of Cholesky factorization via Gaussian elimination can be defined recursively as follows: Let  \( \delta_i \) be the \(i\)-th canonical vector, \( (\delta_i)_i = 1\) and \( ( \delta_i )_j = 0\), \(i \ne j \), then set $S_0=A$ and for $i=1,2,3,\dots$ 
\begin{equation}
\begin{aligned}
      \label{eq:gauss}
      & S_i = S_{i-1} - \frac{ 1 }{ \delta_i^\top S_{i-1} \delta_i } S_{i-1} \delta_i \delta_i^\top S_{i-1}^\top \\
      & \b c_i = \frac{1}{\sqrt{ \delta_i^\top S_{i-1} \delta_i }} S_{i-1} \delta_i
\end{aligned}
\end{equation}
With these definitions, the Cholesky factor $C$ such that $A=CC^\top$ is formed by the columns $\b c_i$, namely 
            \[ C = [ \b c_1 \, \b c_2 \, \ldots \b c_m ].\] 
Using induction, one can directly verify that \( C \) defined above is indeed lower triangular. Moreover, since \( S_i - S_{i-1} = \b c_i \b c_i^\top\) and \( S_m  = 0 \), we have \( A = S_0 = S_0 - S_m = \sum (S_i - S_{i-1} ) = \sum \b c_i \b c_i^\top = C C^\top \).      

The case of classical Laplacian \( L_0 \) is uniquely suited for Schur complements: indeed, in the case \( k = 0 \), each Schur complement  \( S_i \) has the form \( S_i = H_i + K_i \) where both \( H_i \) and \(K_i\) are graph Laplacians on the same set of vertices.
By this key observation, one can construct a stochastic Cholesky preconditioner for \( L_0 \): each \(i\)th step of Gauss elimination substitutes the dense Laplacian \( K_i \) with a sparser Laplacian on a subgraph around the vertex \( i \), \cite{Kyng2016}, to produce fast and sparse approximate Cholesky multiplier.
This property unfortunately holds only for \( L_0 \) and does not carry over to \( L_k \) for \( k > 0 \), as we demonstrate in the next subsection.

\subsection{The structure of the Schur complements $S_i$ for $k=1$}

One of the keys to the decomposition $S_i=H_i+K_i$ for the Schur complements of  \( L_0 \) is the decomposition \( L_0 = \sum_e L_0(e)\), where the sum goes over all the edges of the graph and \( L_0(e) \) is the rank-1 graph Laplacian corresponding to the edge \( e \), \( L_0(e) = w_1(e) \b e \b e^\top\) with \( \b e \) being the corresponding incidence column from the operator \( W_0^{-1} B_1 \). 
One can directly show an analogous decomposition into rank-1 terms for any up-Laplacian:
\begin{lemma}[Rank-1 decomposition of \( \Lu k \)]\label{lem:decomposition_Luk}
      For a simplicial complex \( \mc K \), the following decomposition into rank-1 up-Laplacian terms holds for any \( \Lu k \):
      \begin{equation*}
            \Lu k = \sum_{ t \in \V k } \Lu k ( t ) = \sum_{ t \in \V k } w_k(t) \b e_t \b e_t^\top
      \end{equation*}
      where, for each $t\in \V k$,  \( \Lu k(t) = w_k(t) \b e_t \b e_t^\top\) is the rank-1 up-Laplacian for each triangle \( t \) such that (a) \( w_k(t) = [W_k^2]_{tt} \) is the weight of the simplex $t$,  and (b)  \( \b e_t \) is the vector obtained as the action of the boundary operator $W_k^{-1} B_k$ on \( t\), i.e.~the \(t\)-th column of the matrix \( W_k^{-1} B_k \).
\end{lemma}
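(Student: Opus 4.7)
The plan is to prove the identity by a columnwise expansion of the outer product underlying $\Lu k$. Writing the relevant boundary operator in its fully weighted form from \eqref{eq:weighting}, $\Lu k$ becomes a symmetric product $MM^\top$, so the entire proof reduces to the elementary matrix identity $MM^\top = \sum_j \b m_j \b m_j^\top$, where $\b m_j$ denotes the $j$-th column of $M$. There is no topology or spectral theory in the statement; it is a purely algebraic rearrangement that is available for every rectangular matrix.

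Concretely, I would split the diagonal weight matrix symmetrically across the two copies of the column. Since $W$ is diagonal with entries $\sqrt{w(t)}$, each column of the fully weighted operator is exactly $\sqrt{w(t)}\,\b e_t$, with $\b e_t$ the $t$-th column of the singly-weighted operator appearing in the statement of the lemma. Substituting this into the outer-product expansion $MM^\top = \sum_t \b m_t \b m_t^\top$ and pulling out the scalar squared factor then yields $\Lu k = \sum_t w(t)\,\b e_t \b e_t^\top$, which is the claim.

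I would close by remarking that each summand $w(t)\,\b e_t \b e_t^\top$ is by construction a positive semidefinite rank-one matrix, and it coincides with the up-Laplacian of the elementary sub-complex generated by the single simplex $t$ together with its faces; this is exactly the interpretation that carries the per-edge decomposition $L_0 = \sum_e w_1(e)\,\b e \b e^\top$ used in \cite{Kyng2016} over to the higher-order setting, and the reason this lemma is stated in this particular form. I do not anticipate any genuine obstacle: the content is a pure rearrangement of $MM^\top$, and the only care required lies in tracking the exponents of the diagonal weights (so that the scalar in each rank-one term is $w(t) = [W^2]_{tt}$ rather than $\sqrt{w(t)}$) and in matching the indexing of the sum to the lemma's convention, both of which are routine bookkeeping.
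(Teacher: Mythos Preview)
Your approach is correct and is precisely the direct verification the paper has in mind: the paper does not actually supply a proof of this lemma, merely prefacing it with ``one can directly show,'' and the columnwise identity $MM^\top=\sum_j \b m_j\b m_j^\top$ applied to the weighted boundary operator, together with the bookkeeping $[W]_{tt}=\sqrt{w(t)}$, is exactly that direct computation.
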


Using \Cref{lem:decomposition_Luk}, we obtain the following characterization for the first Schur complement $S_1$ for $\Lu 1$:
\begin{lemma}[First Schur complement \( S_1 \) for \( \Lu 1 \)]
      \label{lemma:schur1}
      For the up-Laplacian \( \Lu 1\), the following derivation of the first Schur complement holds:
      \begin{equation*}
            S_1 = \sum_{t \mid 1 \notin t } w_2(t) \b e_t \b e_t^\top + \frac{ 1 }{ 2 \Omega_{ 1 } } \sum_{\substack{ t_1 \mid 1 \in t_1 \\  t_2 \mid 1 \in t_2 }} w_2(t_1) w_2(t_2) \big[ \b e_{t_1} - \b e_{t_2} \big] \big[ \b e_{t_1} - \b e_{t_2} \big]^\top
      \end{equation*}
      where \( \Omega_{  1 }\) is the total weight of all triangles adjacent to the edge \(1\), \( \Omega_{ 1 } = \sum_{ t \mid 1 \in t } w_2(t) \). 
\end{lemma}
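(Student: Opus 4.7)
The plan is to substitute the rank-1 decomposition $\Lu 1 = \sum_{t \in \V 2} w_2(t)\b e_t \b e_t^\top$ of \Cref{lem:decomposition_Luk} into the Schur-complement update rule \eqref{eq:gauss} at $i=1$, and then algebraically regroup the result. The three needed quantities are the scalar $\delta_1^\top \Lu 1 \delta_1$, the vector $\Lu 1 \delta_1$, and the rank-one matrix $(\Lu 1 \delta_1)(\Lu 1 \delta_1)^\top$, each expressible directly in terms of the $\b e_t$.

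Here the orientation convention from \Cref{sec:simplicial_complex} does all the heavy lifting: since edge $1$ is the smallest-indexed edge of $\V 1$, it is automatically the first-indexed edge of every triangle that contains it, so by convention $(\b e_t)_1 = +1$ whenever $1 \in t$ and $0$ otherwise, with no $\pm$ signs to track. This immediately yields
\begin{equation*}
\delta_1^\top \Lu 1 \delta_1 = \sum_{t \mid 1 \in t} w_2(t) = \Omega_1, \qquad \Lu 1 \delta_1 = \sum_{t \mid 1 \in t} w_2(t)\, \b e_t.
\end{equation*}
Substituting back into \eqref{eq:gauss} and splitting the outer sum defining $\Lu 1$ according to whether $1 \in t$ produces
\begin{equation*}
S_1 = \sum_{t \mid 1 \notin t} w_2(t) \b e_t \b e_t^\top + \sum_{t \mid 1 \in t} w_2(t) \b e_t \b e_t^\top - \frac{1}{\Omega_1} \sum_{\substack{t_1 \mid 1 \in t_1 \\ t_2 \mid 1 \in t_2}} w_2(t_1) w_2(t_2)\, \b e_{t_1} \b e_{t_2}^\top.
\end{equation*}

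To match the claimed form I would then expand $(\b e_{t_1} - \b e_{t_2})(\b e_{t_1} - \b e_{t_2})^\top$ inside the target double sum: the two diagonal terms each collapse via $\sum_{t \mid 1 \in t} w_2(t) = \Omega_1$ to $\Omega_1 \sum_{t \mid 1 \in t} w_2(t) \b e_t \b e_t^\top$, and the two cross terms together contribute $-2\sum_{t_1, t_2} w_2(t_1)w_2(t_2)\b e_{t_1}\b e_{t_2}^\top$. Dividing the resulting identity by $2\Omega_1$ reproduces the middle and last terms of the display above exactly, completing the identification. The only real subtlety is the sign bookkeeping: absent the orientation convention the quantity $(\b e_t)_1$ would be $\pm 1$ and the clean sum of symmetric differences would have to be replaced by signed combinations $\pm \b e_{t_1} \mp \b e_{t_2}$, obscuring the nonnegative rank-1 structure that the paper leverages in the following sections.
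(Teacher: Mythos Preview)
Your approach is essentially the paper's: both plug the rank-1 decomposition into the Schur update and regroup, with you expanding the target double sum and matching terms while the paper works forward via the identity $\b e_{t_1}\b e_{t_2}^\top + \b e_{t_2}\b e_{t_1}^\top = -[\b e_{t_1}-\b e_{t_2}][\b e_{t_1}-\b e_{t_2}]^\top + \b e_{t_1}\b e_{t_1}^\top + \b e_{t_2}\b e_{t_2}^\top$. One small slip: since $\b e_t$ carries the edge-weight normalization, $(\b e_t)_1 = 1/\sqrt{w_1(1)}$ rather than $+1$, so the paper obtains $\delta_1^\top \Lu 1 \delta_1 = \Omega_1/w_1(1)$ and an extra factor $1/\sqrt{w_1(1)}$ in $\Lu 1 \delta_1$; these cancel in the Schur ratio, so your displayed $S_1$ and the rest of the argument are unaffected.
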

\begin{proof}
Following \Cref{eq:gauss}, one needs to compute the constant \( \delta_1^\top S_0 \delta_1 \) and the rank-1 matrix \( S_0 \delta_1 \delta_1^\top  S_0 \) where \( S_0 = \Lu 1\). Note that \( \b e_t^\top \delta_1 = \frac{1}{\sqrt{w_1(1)}} \ds 1_{1 \in t}\), the indicator of the triangle \( t \) having the edge \( 1 \), since \(1\) is necessarily the first (in the chosen order) edge in the triangle; hence, \( \sum_{  t } w_2(t) \b e_t^\top \delta_1 =  \frac{1}{\sqrt{w_1(1)}}\sum_{ t \mid 1 \in t  } w_2(t)  \): 
      \begin{equation*}
            \begin{aligned}
                  \delta_1^\top S_0 \delta_1 & = \delta_1^\top \left(  \sum_{t } w_2(t) \b e_t \b e_t^T \right) \delta_1 = \sum_{ t } w_2(t) \left( \delta_1^\top \b e_t \right)^2 =  \frac{1}{w_1(1)}\sum_{ t \mid 1 \in t } w_2(t) = \frac{ \Omega_{ \{ 1 \} \mid \varnothing}}{w_1(1)}
            \end{aligned}
      \end{equation*}
      \begin{equation*}
            \begin{aligned}
                  S_0 \delta_1 \delta_1^\top S_0 & = \sum_{t_1, t_2}  w_2(t_1) w_2(t_2) \b e_{t_1} \b e_{t_1}^\top  \delta_1 \delta_1^\top   \b e_{t_2} \b e_{t_2}^\top  = \frac{1}{w_1(1)} \sum_{\substack{ t_1 \mid 1 \in t_1 \\  t_2 \mid 1 \in t_2 }}  w_2(t_1) w_2(t_2) \b e_{t_1} \b e_{t_2}^\top \, .
            \end{aligned}
      \end{equation*}
      By symmetry,
      \[  \sum_{\substack{ t_1 \mid 1 \in t_1 \\  t_2 \mid 1 \in t_2 }}  w(t_1) w(t_2) \b e_{t_1} \b e_{t_2}^\top = \frac{1}{2}  \sum_{\substack{ t_1 \mid 1 \in t_1 \\  t_2 \mid 1 \in t_2 }}  w(t_1) w(t_2) \left( \b e_{t_1} \b e_{t_2}^\top + \b e_{t_2} \b e_{t_1}^\top \right), \] 
      one can note by a straightforward arithmetic transformation that
      \begin{equation*}
            \b e_{t_1} \b e_{t_2}^\top + \b e_{t_2} \b e_{t_1}^\top = - \big[ \b e_{t_1} - \b e_{t_2} ] \big[ \b e_{t_1} - \b e_{t_2} ]^\top + \b e_{t_1} \b e_{t_1}^\top + \b e_{t_2} \b e_{t_2}^\top
      \end{equation*}
      Finally, \( \sum_{\substack{ t_1 \mid 1 \in t_1 \\  t_2 \mid 1 \in t_2 }}  w(t_1) w(t_2) \b e_{t_1} \b e_{t_1}^\top = \Omega_{ 1 } \sum_{ t \mid 1 \in t  } w(t) \b e_t \b e_t^\top \).
\end{proof}

The structure of the first Schur complement \( S_1 \) is reminiscent of the classical graph Laplacian case as $S_1$ is formed by the sum of two terms: 
\[
H_1 = \sum_{t \mid 1 \notin t } w_2(t) \b e_t \b e_t^\top, \qquad K_{1}= \frac{ 1 }{ 2 \Omega_{ 1 } } \sum_{\substack{ t_1 \mid 1 \in t_1 \\  t_2 \mid 1 \in t_2 }} w_2(t_1) w_2(t_2) \big[ \b e_{t_1} - \b e_{t_2} \big] \big[ \b e_{t_1} - \b e_{t_2} \big]^\top.
\]
As in the \( k =0 \) case,  the term $H_1$, which corresponds to the portion of the original \( \Lu 1 \) not adjacent to the edge being eliminated (edge \( 1 \)), is an up-Laplacian. The remaining term of the original \( \Lu 1 \) which consists of up-Laplacians adjacent to edge \(1\), is transformed here into the matrix \( K_{ 1 } \), which we refer to as the \emph{cyclic term}. Unfortunately, unlike the $0$-Laplacian case, it is easy to realize that the cyclic term \( K_{ 1 } \) is generally not an up-Laplacian. We show this with a simple illustrative example below.

\begin{example}
      Assume the simplicial complex formed by two triangles, \( \V 0 = \{ 1, 2, 3, 4\} \), \( \V 1 = \{ 12, 13, 14, 23, 24 \} \) and \( \V 2 = \{ 123, 124 \} \), adjacent by the first edge \( 12 \) with \( W_1 = W_2 = I \). Then, vectors \( \b e_{t_1} = \begin{mt} 1 & -1 & 0 & 1 & 0 \end{mt}^\top \), \( \b e_{t_2} = \begin{mt} 1 & 0 & -1 & 0 & 1  \end{mt}^\top \) and \( K_{ 1 } = \begin{mt} 0 & -1 & 1 & 1 & -1  \end{mt} \cdot \begin{mt} 0 & -1 & 1 & 1 & -1  \end{mt}^\top \). Note that \(K_{ 1 } \) is denser than \( \Lu 1 \) and has lost the structural balance of \( \Lu 1\): \(\diag K_{ 1 } = K_{ 1 } \b 1 = \b 0 \) where \( \diag \Lu 1 = \Lu 1 \b 1 = \left( \Omega_{ i } \right)_{i=1}^{m_2 }  \).
\end{example}

Further computation of Schur complements \( S_2, S_3, S_4 \dots \) unavoidably introduces a fastly growing number of similar cyclic terms.
Thus, to build an efficient preconditioner for \( \Lu 1 \), one needs to deal with the cyclic terms arising on each step \( S_i\). 

To this end, note that \( K_{ 1 } \) is zero if and only if there is a unique triangle \( t \in \V 2 \) adjacent to the edge \( 1 \). Maintaining this property for all Schur complements is equivalent to the concept of weak collapsibility which we introduce next.

\section{Collapsibility of a Simplicial Complex}
\label{sec:collapse}

In this section we borrow the terminology from \cite{whitehead1939simplicial} to introduce the concept of collapsibility and then weak collapsibility of a simplicial complex. 
 The simplex \(\sigma \in \mc K \) is \emph{free} if it is a face of exactly one simplex \(\tau = \tau(\sigma) \in \mc K \) of higher order (maximal face).  The \emph{collapse} \( \mc K \backslash \{ \sigma \} \) of \( \mc K \) at a free simplex \( \sigma \) is the operation of reducing $\mc K$ to $\mc K'$, where \( \mc K' = \mc K - \sigma - \tau \); namely, this is the operation of removing a simplex \( \tau \) having an accessible (not included in another simplex) face \(\sigma\).
 A sequence of collapses done at the simplicies  \( \Sigma = \{ \sigma_1, \sigma_2, \ldots \} \) is called a \emph{collapsing sequence}; formally:
  \begin{definition}[Collapsing sequence]
       Let \( \mc K \) be a simplicial complex. We say that \( \Sigma = \{ \sigma_1, \sigma_2, \ldots \} \) is a \emph{collapsing sequence} for $\mc K$ if \( \sigma_1 \) is free in \( \mc K =\mc K^{(1)}\) and each \( \sigma_i \), \( i > 1 \), is free at 
       \( \mc K^{(i)} = \mc K^{(i-1)} \backslash \{ \sigma_i \} \). The resulting complex $\mc L$ obtained collapsing $\mc K$ at $\Sigma$ is denoted by \( \mc L = \mc K \backslash \Sigma \).
 \end{definition}
  Note that, by  definition, every collapsing sequence \( \Sigma \) has a corresponding sequence \( \ds T = \{ \tau(\sigma_1), \tau(\sigma_2), \ldots \} \) of maximal faces being collapsed at every step. The notion of \emph{collapsible simplicial complex} is defined in \cite{whitehead1939simplicial} as follows
 \begin{definition}[Collapsible simplicial complex]
      The simplicial complex \( \mc K \) is collapsible if there exists a collapsing sequence \( \Sigma \) such that \( \mc K \) collapses to a single vertex, i.e.\ \( \mc K \backslash \Sigma = \{ v \} \) for some $v\in \V0$.
\end{definition}

While least square problems with collapsible simplicial complexes can be solved directly in an efficient way, \cite{cohen2014solving}, collapsibility is a strong requirement for a simplicial complex. In fact, determining whether the complex is collapsible is in general \emph{NP-complete}, \cite{tancer2016recognition}, even though it an be almost linear for a specific set of families of \( \mc K \), \cite{cohen2014solving}. Moreover, simplicial complexes are rarely collapsible, as we discuss in the following.

Next, we recall the concept of a \(d\)-Core, \cite{tancer2008dcollapse}:
\begin{definition}[\(d\)-Core]
      A \(d\)-Core is a subcomplex of \( \mc K \) such that every simplex of dimension \( d - 1\) belongs to at least \( 2 \) \(d\)-simplices.
\end{definition}
So, for example,  a \(2\)-Core of a $2$-skeleton $\mc K$, is a subcomplex of the original complex \( \mc K \) such that every edge from \( \V 1 \) belongs to at least \(2\) triangles from \( \V 2 \). Finally, we say that \( \mc K \) is \(d\)-collapsible if it can be collapsed only by collapses at simplices \( \sigma_i \) of order smaller than \(d\), i.e. \( \dim \sigma_i \le d-1 \). Then we have the following criterium: 

\begin{lemma}[\cite{lofano2019worst}]
      \( \mc K \) is \(d\)-collapsible if and only if it does not contain any \( d\)-core.
\end{lemma}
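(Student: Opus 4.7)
The plan is to establish the equivalence by proving each direction separately: \emph{$d$-collapsible} $\Rightarrow$ \emph{no $d$-core} by contrapositive via a preservation-under-collapse argument, and \emph{no $d$-core} $\Rightarrow$ \emph{$d$-collapsible} by induction on the number of $d$-simplices in $\mc K$.

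For the forward direction, I would assume a $d$-core $\mc C \subseteq \mc K$ exists (necessarily containing at least one $d$-simplex in the non-trivial case) and first replace $\mc C$ by the subcomplex spanned by its $d$-simplices. This reduction preserves the $d$-core property because every $(d-1)$-face of the smaller subcomplex is still a $(d-1)$-face of some $d$-simplex of the original $\mc C$, and the at-least-two $d$-cofaces provided by the $d$-core property of $\mc C$ are all $d$-simplices of $\mc C$, hence also of the restriction. After this reduction, every simplex of $\mc C$ is a face of some $d$-simplex of $\mc C$. Now consider any collapse performed at a free $\sigma$ with $\dim \sigma \leq d-1$ and unique maximal coface $\tau$; freeness forbids intermediate cofaces, so $\dim \tau = \dim \sigma + 1 \leq d$. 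Assuming for contradiction that $\sigma \in \mc C$, I would split into two cases: if $\dim \sigma = d-1$, the $d$-core condition gives at least two distinct $d$-cofaces of $\sigma$ inside $\mc C$, and hence inside the current complex, contradicting freeness; if $\dim \sigma < d-1$, the restriction supplies some $d$-simplex $\tau' \in \mc C$ with $\sigma \subset \tau'$, and since $\dim \tau' = d > \dim \tau$, $\tau'$ is a second proper coface of $\sigma$, again contradicting freeness. Hence $\sigma \notin \mc C$, and then $\tau \notin \mc C$ by the subcomplex property. Inductively, $\mc C$ persists through every admissible collapse, so the $d$-simplices of $\mc C$ can never all be removed, i.e., $\mc K$ is not $d$-collapsible.

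For the converse, assuming $\mc K$ contains no $d$-core, I would induct on the number of $d$-simplices. The base case of no $d$-simplex is trivial. For the inductive step, let $\mc K^{(d)}$ denote the subcomplex of $\mc K$ spanned by its $d$-simplices. If every $(d-1)$-face of $\mc K^{(d)}$ had at least two $d$-cofaces in $\mc K^{(d)}$, then $\mc K^{(d)}$ would itself be a $d$-core of $\mc K$, contradicting the hypothesis. So some $(d-1)$-face $\sigma$ of $\mc K^{(d)}$ has exactly one $d$-coface $\tau$ (at least one since $\sigma \in \mc K^{(d)}$). Because $d$-cofaces of $\sigma$ in $\mc K$ coincide with those in $\mc K^{(d)}$, the simplex $\tau$ is the unique maximal coface of $\sigma$ in $\mc K$, making $\sigma$ free. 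The complex $\mc K' = \mc K \setminus \{\sigma, \tau\}$ obtained by the $d$-collapse at $\sigma$ has one fewer $d$-simplex, and any $d$-core of $\mc K'$ is intrinsically a $d$-core of $\mc K$, so $\mc K'$ also has no $d$-core. The inductive hypothesis yields a $d$-collapsing sequence for $\mc K'$, and prepending the collapse at $\sigma$ gives one for $\mc K$.

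The main obstacle I expect is the subcase $\dim \sigma < d-1$ in the forward direction: the bare $d$-core definition constrains only $(d-1)$-simplices, so producing a second $d$-coface of a strictly lower-dimensional $\sigma$ requires the preparatory reduction of $\mc C$ to the subcomplex spanned by its $d$-simplices. Verifying that this reduction preserves the $d$-core property, and that it remains stable under the admissible collapses, is the delicate part. Once it is in place, the remainder is a clean case split on $\dim \sigma$ together with a straightforward induction.
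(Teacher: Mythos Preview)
The paper does not give its own proof of this lemma; it is stated with a citation to \cite{lofano2019worst} and used as a black box. So there is no in-paper argument to compare against.

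Your proof is essentially correct for the setting the paper works in, namely $\dim \mc K \le d$ (the paper only ever applies the lemma to $2$-skeletons with $d=2$). The forward direction is clean once you make the reduction to the subcomplex $\mc C'$ spanned by the $d$-simplices of the putative $d$-core; your verification that $\mc C'$ is again a $d$-core is the right move, and the case split on $\dim\sigma$ then goes through because the paper's notion of ``free'' is ``exactly one proper coface''. The converse induction is also fine, but note that the step ``$\tau$ is the unique maximal coface of $\sigma$ in $\mc K$, making $\sigma$ free'' silently uses $\dim\mc K\le d$: a $(d{-}1)$-simplex with a single $d$-coface need not be free if higher-dimensional simplices are present. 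Since the paper's ambient assumption throughout \S\ref{sec:collapse} is a $2$-skeleton, this is harmless here, but it would be worth stating the dimension hypothesis explicitly.
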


The \(d\)-Core is the generalization of the cycle for the case of \(1\)-collapsibility of a classical graph, and finding a \(d\)-Core inside a complex \( \mc K \) is neither trivial nor computationally cheap. Note that a \(d\)-Core is in general dense, due to its definition, and does not have a prescribed structure. We illustrate simple exemplary cores in the case of $d=2$ in \Cref{fig:2-core}, hinting at the combinatorial many possible configurations for a general \(d\)-Core, for \( d \ge 2 \). 
\begin{figure}[htbp]
      \centering
      \includegraphics[width=0.75\columnwidth]{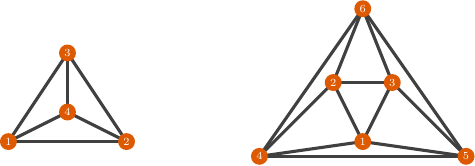}
      \caption{ \(2\)-Core, examples: all \(3\)-cliques in graphs are included in corresponding \( \V 2 \). \label{fig:2-core} }
\end{figure}

Due to the appearance of \( d\)-Core being a local phenomenon, an arbitrary simplicial complex \(\mc K\) tends to contain \(2\)-Cores as long as \( \mc K \) is denser than the trivially collapsible case.

While collapsibility is a strong requirement, in the next section, we show that a weaker condition is enough to efficiently design a preconditioner for any ``sparse enough'' simplicial complex in the \(2\)-skeleton case. Whilst the extension to \(k > 2\) is not hard, a careful reconsideration of necessary modifications in the proof of \Cref{thm:poly} may be required.

\subsection{Weak collapsibility}

Let the complex \( \mc K \) be restricted up to its \(2\)-skeleton, \( \mc K = \V 0 \cup \V 1 \cup \V 2 \), and assume \( \mc K \) is collapsible. Then a  collapsing sequence \( \Sigma \) necessarily involves collapses at simplices \( \sigma_i \) of different orders: at edges (eliminating \emph{edges} and \emph{triangles}) and at vertices (eliminating \emph{vertices} and \emph{edges}). One can show that for a given collapsing sequence \( \Sigma \) there is a reordering \( \tilde \Sigma \) such that \( \dim \tilde{\sigma_i} \) in the reordered sequence are non-increasing, {\cite[Lemma 2.5]{cohen2014solving}}. Namely, if such a complex is collapsible, then there is a collapsible sequence \( \Sigma = \{ \Sigma_1, \Sigma_0 \} \) where \( \Sigma_1 \) contains all the collapses at edges first and \( \Sigma_0 \) is composed of collapses at vertices. Note that the partial collapse \( \mc K \backslash \Sigma_1 = \mc L \) eliminates all the triangles in the complex, \( \mc V_2 (\mc L) = \varnothing \); otherwise, the whole sequence \( \Sigma \) is not collapsing \( \mc K \) to a single vertex. Since \( \mc V_2 (\mc L ) = \varnothing \), the associated up-Laplacian \( \Lu 1 ( \mc L ) = 0 \).

\begin{definition}[Weakly collapsible complex]
      A simplicial complex \( \mc K \) restricted to its \(2\)-skeleton is called \emph{weakly collapsible}, if there exists a collapsing sequence \( \Sigma_1 \) such that the simplicial complex \( \mc L = \mc K \backslash \Sigma_1 \) has no simplices of order \(2\), i.e. \( \mc V_2(\mc L) = \varnothing \) and \( \Lu 1 (\mc L ) = 0 \).
\end{definition}

      Note that while a collapsible complex is necessarily weakly collapsible, the opposite does not hold. Consider the following example in \Cref{fig:weak_example}: the initial complex is weakly collapsible either by a collapse at \( [3, 4] \) or at \( [2, 4] \). After this, the only available collapse is at the vertex \([4]\) leaving the uncollapsible \(3\)-vertex structure.
      \begin{figure}[htbp]
            \centering
            \includegraphics[width=1.0\columnwidth]{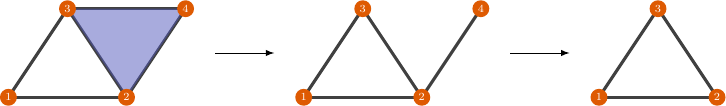}
            \caption{Example of weakly collapsible but not collapsible simplicial complex \label{fig:weak_example}}
      \end{figure}

For a given simplicial complex \( \mc K \), one can use the following greedy algorithm in order to find a collapsing sequence \(\Sigma \) and test for collapsibility: at each iteration perform any of possible collapses; in the absence of free edges, the complex should be considered not collapsible. This greedy procedure is illustrated in \Cref{algo:greedy}. 

Specifically, let \( \Delta_\sigma \) be a set of triangles of \( \mc K \) containing the edge \( \sigma \), \( \Delta_\sigma = \{ t \mid  t \in \V 2  \text{ and } \sigma \in t \} \). Then the edge \( \sigma \) is free and has only one adjacent triangle \( \tau = \tau(\sigma)\) iff \( | \Delta_\sigma | = 1 \); let \( F = \{ \sigma \mid | \Delta_e | = 1  \} \) is a set of all free edges. Note that \( | \Delta_e | \le m_0 -2 = \mc O ( m_0  ) \).

\begin{algorithm}[h]
      \caption{ \texttt{GREEDY\_COLLAPSE}(\(\mc K\)):  greedy algorithm for the weak collapsibility
      \label{algo:greedy}}
      \begin{algorithmic}[1]
            \Require initial set of free edges \( F \), adjacency sets \(  \{ \Delta_{ \sigma_i } \}_{i=1}^{ m_1 } \)
             \State \( \Sigma = [ \; ] , \; \ds T = [ \; ] \) \Comment{ initialize the collapsing sequence}
             \While{ \( F \ne \vn \) \textbf{ and } \( \V 2 \ne \vn \) }
                  \State \( \sigma \gets \texttt{pop}( F ) \), \( \; \tau \gets \tau(\sigma)  \) \Comment{ pick a free edge \( \sigma \) }
                  \State \( \mc K \gets \mc K \backslash \{ \sigma \} \), \( \; \Sigma \gets [ \, \Sigma \;\; \sigma \, ] , \; \ds T \gets [ \ds T \; \tau ] \) \Comment{ \small \( \tau \) is a triangle being collapsed; \( \tau = [ \sigma, \sigma_1, \sigma_2 ] \) }
                  \State \( \Delta_{\sigma_1} \gets \Delta_{\sigma_1} \backslash \tau  \), \( \; \Delta_{\sigma_2} \gets \Delta_{\sigma_2} \backslash \tau  \) \Comment{ remove \( \tau \) from adjacency lists }
                  \State \( F \gets F \cup \{ \sigma_i \, |  \, i = 1, 2 \text{ and } | \Delta_{\sigma_i} | = 1 \}  \) \Comment{ update \( F \) if any of \( \sigma_1 \) or \( \sigma_2 \) has become free }
             \EndWhile
             \State \Return \( \mc K, \, \Sigma, \, \ds T \)
      \end{algorithmic}
\end{algorithm}

\Cref{algo:greedy} recursively picks up a free edge \( \sigma \) from the set \( F \), performs a collapse \( \mc K \gets \mc K \backslash \{ \sigma \} \) and updates \( F \) for the collapsed subcomplex. Then, the greedy approach may fail to find the collapsing sequence only if it gets stuck on the collapsible complex, so the order of collapses matters. We demonstrate the validity of the greedy \Cref{algo:greedy} in the following theorem:

\begin{theorem}\label{thm:poly}
      \Cref{algo:greedy} finds a weakly collapsible sequence of \( 2\)-skeleton simplicies in polynomial time. Additionally, it finds a collapsing sequence if and only if the simplicial complex is collapsible. 
\end{theorem}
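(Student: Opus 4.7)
The plan is to split the theorem into two parts: polynomial-time termination with a valid weakly collapsing sequence whenever one exists, and the characterization of full collapsibility via the residual complex.

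For the polynomial time bound, each iteration of the while loop in \Cref{algo:greedy} pops a single free edge, performs one edge--triangle collapse, updates two adjacency sets $\Delta_{\sigma_j}$, and inserts at most two new elements into $F$. Every iteration removes a distinct edge, so at most $m_1$ iterations occur, each of cost $\mc O(1)$ with suitable data structures for $F$ and the adjacency lists. Hence the overall cost is polynomial in $m_0$, $m_1$, $m_2$.

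The heart of the correctness argument is the following exchange lemma: if $\mc K$ is weakly collapsible and $\sigma$ is any free edge of $\mc K$, then $\mc K \backslash \{\sigma\}$ is also weakly collapsible. I would prove this by taking any weakly collapsing sequence $\Sigma=(\sigma_1,\ldots,\sigma_m)$ of $\mc K$, locating the unique index $i$ with $\tau(\sigma_i)=\tau(\sigma)$ (which exists because every triangle is eliminated exactly once by $\Sigma$), and showing that the reordered sequence $\Sigma'=(\sigma,\sigma_1,\ldots,\sigma_{i-1},\sigma_{i+1},\ldots,\sigma_m)$ is still a valid weakly collapsing sequence for $\mc K$. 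For each $j<i$, the prefix of $\Sigma'$ removes, in addition to the original $\{\tau(\sigma_1),\ldots,\tau(\sigma_{j-1})\}$, the triangle $\tau(\sigma_i)\neq\tau(\sigma_j)$, so $\sigma_j$ retains $\tau(\sigma_j)$ as its unique adjacent triangle and stays free; for $j>i$, since $\tau(\sigma)=\tau(\sigma_i)$ is eliminated at step one, the set of removed triangles coincides with the original one and the state is identical. Dropping the initial $\sigma$ from $\Sigma'$ yields a weakly collapsing sequence for $\mc K \backslash \{\sigma\}$. Induction on $m_2$, together with the fact that a weakly collapsible complex with triangles always possesses a free edge (namely the first element of any weakly collapsing sequence), then guarantees that \Cref{algo:greedy} never gets stuck and outputs a full weakly collapsing sequence of $\mc K$.

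For the full collapsibility characterization, I would invoke Lemma 2.5 of \cite{cohen2014solving}, which lets any collapsing sequence of a $2$-dimensional complex be reordered so that all edge collapses precede all vertex collapses. Thus, if $\mc K$ is collapsible, its edge-collapse prefix is a weakly collapsing sequence, and the associated residual $\mc L = \mc K\backslash \Sigma_1$ is a graph that further collapses to a vertex, hence a tree. Since elementary collapses are homotopy equivalences, every residual graph produced by \Cref{algo:greedy} shares the first Betti number of $\mc L$, so it is also a tree and can be completed to a full collapse of $\mc K$ by the obvious vertex collapses on the remaining leaves. The converse is immediate from the definition of collapsibility. The main obstacle throughout is the bookkeeping in the exchange lemma, for which the crucial observation is that each triangle is acted upon exactly once in any weakly collapsing sequence, so moving $\sigma$ to the front only alters the fate of its companion edge $\sigma_i$ and leaves every other step of $\Sigma$ intact.
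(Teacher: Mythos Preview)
Your argument is correct but takes a different route from the paper. The paper proceeds by contradiction via a minimal counterexample: it assumes a smallest (in $m_2$) weakly collapsible $\mc K$ on which greedy can get stuck, takes free edges $\sigma$ (leading to success) and $\sigma'$ (leading to failure), and shows through a case analysis---on whether distinct free edges lie in the same triangle---that minimality forces $\tau(\sigma)=\tau(\sigma')$, so the two residuals differ only by an irrelevant hanging edge, a contradiction. Your exchange lemma is the direct, constructive counterpart of this confluence statement: rather than analysing a hypothetical minimal bad instance, you show outright that any free edge can be pulled to the front of any weakly collapsing sequence (the key bookkeeping point being that for $j<i$ the edge $\sigma_j$ cannot lie in $\tau(\sigma)$, since $\sigma_j$ is already free in $\mc K^{(j)}$ while $\tau(\sigma)$ is still present there), and then induct on $m_2$. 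This is cleaner and avoids the case split; the paper's argument, by contrast, makes the ``two free edges in one triangle'' obstruction explicit. Your treatment of the second sentence also goes further than the paper, whose proof stays entirely within weak collapsibility; your appeal to \cite[Lemma~2.5]{cohen2014solving} together with homotopy invariance of elementary collapses to conclude that greedy's residual graph is a tree (and hence vertex-collapsible) is sound and fills that gap.
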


\begin{proof} 
      Clearly, \Cref{algo:greedy} runs polynomially with respect to the number of simplexes in \(\mc K\), so its consistency automatically yields polynomiality.

      The failure of the greedy algorithm would indicate the existence of a weakly collapsible complex \( \mc K \) such that the greedy algorithm gets stuck at a \( 2 \)-Core, which is avoidable for another possible order of collapses. Among all such complexes, let \( \mc K \) be any minimal one with respect to the number of triangles \( m_2 \). Then there exiss a free edge \( \sigma \in \V 1 \) such that \( \mc K \backslash \{ \sigma \} \) is \emph{collapsible} and another free edge \( \sigma' \in \V 2 \) such that \( \mc K \backslash \{ \sigma' \} \) is \emph{not collapsible}.

      Note that if \( \mc K \) is minimal then any pair of free edges \( \sigma_1 \) and \( \sigma_2 \) belong to the same triangle: \( \tau(\sigma_1) = \tau(\sigma_2) \).

      Indeed, for any \( \tau(\sigma_1) \ne \tau(\sigma_2) \),  \( \mc K \backslash \{ \sigma_1, \sigma_2 \} = \mc K \backslash \{ \sigma_2, \sigma_1 \} \). Let \( \tau(\sigma_1) \ne \tau(\sigma_2) \) for at least one pair of \( \sigma_1 \) and \( \sigma_2 \); in our assumption, either (1) both \( \mc K \backslash \{ \sigma_1 \} \) and \( \mc K \backslash \{ \sigma_2 \} \), (2) only \( \mc K \backslash \{ \sigma_1 \}  \) or (3) none are collapsible. 
      
      In the first case, either \( \mc K \backslash \{ \sigma_1 \} \) or \( \mc K \backslash \{ \sigma_2 \} \) is a smaller example of the complex satisfying the assumption above (since a bad edge \( \sigma' \) can not be either \( \sigma_1\) or \( \sigma_2\) and belongs to collapsed complexes), hence, violating the minimality.
      
      If only \( \mc K \backslash \{ \sigma_1 \} \) is collapsible, then \( \mc K \backslash \{ \sigma_2, \sigma_1 \}  \) is not collapsible (since \( \mc K \backslash \{ \sigma_2\} \) is not collapsible and \( \sigma_1 \) is free in \( \mc K \) and in \( \mc K \backslash \{ \sigma_2\} \) assuming \( \tau(\sigma_1) \ne \tau(\sigma_2)\)); hence, \( \mc K \backslash \{ \sigma_1, \sigma_2 \} \) is not collapsible (since \( \mc K \backslash \{ \sigma_1, \sigma_2 \}  = \mc K \backslash \{ \sigma_2, \sigma_1 \}  \) as we stated above), so \( \mc K \backslash \{ \sigma_1 \} \) is a smaller example of a complex satisfying the assumption. 
      
      Finally, if both \( \mc K \backslash \{ \sigma_1 \} \) and \( \mc K \backslash \{ \sigma_2 \} \) are collapsible, then for known \( \sigma' \) such that \( \mc K \backslash \{ \sigma' \} \) is not collapsible, \( \tau(\sigma') \ne \tau(\sigma_1)\) or \( \tau(\sigma') \ne \tau(\sigma_2) \), which can be treated as the previous point.

      As a result, for \( \sigma \) ( \( \mc K \backslash \{ \sigma \} \) is collapsible) and for \( \sigma' \) ( \( \mc K \backslash \{ \sigma' \} \) is not collapsible ) it holds that \( \tau (\sigma) = \tau (\sigma') \Rightarrow  \sigma \cap \sigma' = \{ v \}  \), so after collapses \( \mc K  \{ \sigma \} \) and \( \mc K \backslash \{ \sigma' \} \) we arrive at two identical simplicial complexes besides the hanging edge (\( \sigma'\) or \( \sigma\)) irrelevant for the weak collapsibility. Since a simplicial complex can not be simultaneously collapsible and not collapsible,  the question of weak collapsibility can always be resolved by the greedy algorithm which has polynomial complexity.
\end{proof}

\subsection*{Computational cost of the greedy algorithm}

The complexity of \Cref{algo:greedy} rests upon the precomputed \( \sigma \mapsto \Delta_\sigma \) structure that de-facto coincides with the boundary operator \( B_2 \) (assuming \( B_2 \) is stored as a sparse matrix, the adjacency structure describes its non-zero entries). Similarly, the initial \( F \) set can be computed alongside the construction of \( B_2 \) matrix. Another concession is needed for the complexity of the removal of elements from \( \Delta_{\sigma_i} \) and \( F \), which may vary from \( \mc O (1) \) on average up to guaranteed \( \log ( | \Delta_{\sigma_i} | ) \). As a result, given a pre-existing \( B_2 \) operator, \Cref{algo:greedy} runs linearly, \( \mc O ( m_1  )\), or almost linearly depending on the realisation, \( \mc O ( m_1 \log m_1 )\).

\section{ Preconditioning through the Subsampling of the \( 2 \)-Core }
\label{sec:heavy}
In \Cref{sec:cholesky} we have shown that the efficient computation of the Cholesky multiplier for \( \Lu 1 \) is complicated by arising cyclic terms. At the same time, the absence of the cyclic terms in the Schur complements is a characteristic of weakly collapsible complexes. 
Using this observation, in this section, we develop a Cholesky-like preconditioning scheme based on an efficient Cholesky multiplier for weakly collapsible complexes. 

First, we demonstrate that a weakly collapsible simplicial complex \(\mc K \) immediately yields an exact Cholesky decomposition for its up-Laplacian:
\begin{lemma}\label{lem:exact}
Assume the  2-skeleton simplicial complex \( \mc K \) is weakly collapsible through the collapsing sequence \( \Sigma \). Let \( \ds T \) be the corresponding sequence of maximal faces and let   \( P_\Sigma \) and \( P_{\ds T} \)  be the permutation matrices of the two sequences, i.e. such that  \( \left[ P_\Sigma \right]_{ij} = 1  \iff j = \sigma_i \), and similarly for $P_{\ds T}$. Then $C =  P_\Sigma B_2 P_{\ds T}$  is an exact Cholesky mulitplier for $P_\Sigma \Lu 1(\mc K) P_\Sigma^\top$, i.e.
      \( P_\Sigma \Lu 1 (\mc K) P_\Sigma^\top = C C^\top \).
\end{lemma}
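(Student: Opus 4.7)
My plan is to prove this by direct computation, exploiting that the permutation matrices $P_\Sigma$ and $P_{\ds T}$ act as re-orderings that align the combinatorics of the collapsing sequence with the triangular structure of $C$. The proof splits naturally into two steps: (i) checking the factorization identity $C C^\top = P_\Sigma \Lu 1 P_\Sigma^\top$, and (ii) checking that $C$ is lower triangular with nonzero diagonal.

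For step (i), since $\mc K$ is weakly collapsible, every triangle is eliminated by $\Sigma$, so $|\ds T| = m_2$ and $P_{\ds T}$ is a genuine $m_2 \times m_2$ permutation matrix; in particular $P_{\ds T} P_{\ds T}^\top = I$. Then
\begin{equation*}
C C^\top \;=\; (P_\Sigma B_2 P_{\ds T})(P_\Sigma B_2 P_{\ds T})^\top \;=\; P_\Sigma B_2 (P_{\ds T} P_{\ds T}^\top) B_2^\top P_\Sigma^\top \;=\; P_\Sigma \Lu 1 P_\Sigma^\top,
\end{equation*}
using $\Lu 1 = B_2 B_2^\top$. This step is essentially a bookkeeping observation.

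For step (ii), I would read off the entries directly: by the definitions of $P_\Sigma$ and $P_{\ds T}$, one has $C_{ij} = [B_2]_{\sigma_i, \tau_j}$. The key combinatorial fact to invoke is that at step $i$ of the collapsing, $\sigma_i$ is free in $\mc K^{(i)} = \mc K \backslash \{\sigma_1,\dots,\sigma_{i-1}\}$, i.e.\ $\sigma_i$ is a face of exactly one triangle of $\mc K^{(i)}$, namely $\tau_i$. The triangles surviving in $\mc K^{(i)}$ are precisely $\{\tau_i, \tau_{i+1}, \dots, \tau_{m_2}\}$, so for every $j > i$ the edge $\sigma_i$ is not a face of $\tau_j$, hence $[B_2]_{\sigma_i, \tau_j} = 0$. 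This gives $C_{ij} = 0$ whenever $j > i$. For $j = i$, on the other hand, $\sigma_i$ is a face of $\tau_i$, so $[B_2]_{\sigma_i, \tau_i}$ is a nonzero entry ($\pm 1$ unweighted, or $\pm \sqrt{w_2(\tau_i)/w_1(\sigma_i)}$ with the weighting scheme of \Cref{eq:weighting}). Thus $C$ is lower triangular with nonzero diagonal, confirming it as a bona fide Cholesky factor.

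I do not expect any serious obstacle: the content of the lemma is really an observation that the combinatorial datum of a weak collapse is precisely an elimination ordering turning $B_2$ into a triangular factor. The only mild subtlety worth flagging explicitly is that weak collapsibility (rather than mere collapsibility) is exactly what guarantees $P_{\ds T}$ is a full permutation, which is what makes the identity $P_{\ds T}P_{\ds T}^\top = I$ available and ensures that the Schur complements arising from eliminating the rows indexed by $\Sigma$ in the order $\sigma_1,\dots,\sigma_{m_2}$ carry no residual ``cyclic term'' in the sense of \Cref{lemma:schur1}.
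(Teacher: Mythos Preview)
Your proposal is correct and follows essentially the same approach as the paper: both argue that the permutations reorder edges and triangles so that the freeness condition forces the above-diagonal entries of $C$ to vanish, and both obtain $CC^\top = P_\Sigma \Lu 1 P_\Sigma^\top$ by the orthogonality $P_{\ds T}P_{\ds T}^\top = I$. The paper phrases the triangularity by looking at columns (``the first $(i-1)$ entries of the $i$-th column are $0$, otherwise one of the previous edges is not free'') whereas you phrase it by looking at rows, but this is the same argument with the indices swapped.
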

\begin{proof}
      Note that the sequences \( \Sigma \) and \( \ds T \) (and the multiplication by the corresponding permutation matrices) impose only the reordering of \( \V 1 \) and \( \V 2\), respectively; after such reordering the \(i\)-th edge collapses the \(i\)-triangle. Hence, the first \((i-1) \) entries of the \( i \)-th column of the matrix \( B_2 \) ( \( \left[ B_2 \right]_{:, i} = \sqrt{w(t_i)} \b e_{t_i} \) )  are \( 0 \), otherwise one of the previous edges is not free. As a result, \( C \) is lower-triangular and by a direct computation one has \( C C^\top = P_\Sigma \Lu 1 (\mc K ) P_\Sigma^\top \).
\end{proof}

An arbitrary simplicial complex \( \mc K \) is generally not weakly collapsible (see \Cref{fig:2-core}). Specifically, weak collapsibility is a property of sparse simplicial complexes with the sparsity being measured by the number of triangles \(m_2 \) (in the weakly collapsible case \( m_2 < m_1 \) 
since each collapse at the edge eliminates exactly one triangle); hence, the removal of triangles from \( \V 2 \) can potentially destroy the 2-Core structure inside \( \mc K \) and make the complex weakly collapsible.

      With this observation, in order to find a cheap and effective preconditioner for \( \Lu 1(\mc K )\), one may search for a weakly collapsible subcomplex \( \mc L \subseteq \mc K \) and use its exact Cholesky multiplier \( C \) as a preconditioner for \( \mc K \).

Specifically, such subcomplex \( \mc L \) should satisfy the following properties:

      \begin{enumerate}[leftmargin = *, label=(\arabic*) ]
            \item it has the same set of nodes and edges, \( \mc V_0(\mc L) = \V 0 \) and \( \mc V_1(\mc L) = \V 1 \);  
            \item triangles in \( \mc L \) are subsampled, \( \mc V_2(\mc L ) \subseteq \V 2 \);
            \item \( \mc L \) is weakly collapsible through some  collapsing sequence \( \Sigma \) and corresponding sequence of maximal faces \( \ds T \);
            \item \( \mc L \) has the same 1-homology as \( \mc K \),  that is \( \ker L_1(\mc K ) =  \ker L_1 (\mc L ) \);
            \item the Cholesky multiplier \( C = P_\Sigma B_2 (\mc L ) P_{\ds T }\) improves the condition number of \( \Lu 1 (\mc K )\), namely $\kappa_+ ( C^+ P_\Sigma \Lu 1 (\mc K ) P_\Sigma^\top (C^+)^\top  ) \ll \kappa_+ ( \Lu 1 (\mc K) )$.
      \end{enumerate}

Let us comment on the conditions above. Conditions {(1)} and {(2)} are automatically met when a subcomplex \( \mc L \) is obtained from \( \mc K \) through the elimination of triangles. Condition {(3)} is a structural requirement on \( \mc L \) and can be guaranteed by the design of the subcomplex using the proposed \Cref{algo:heavy_subsampling}. Condition {(4)} guarantees that the preconditioning strategy is bijective, as we show in the next \Cref{rem:condition4}.  
Finally, condition {(5)} asks for a better condition number and is checked numerically in \Cref{sec:experiments}. However, whilst one can not guarantee improvement in preconditioning quality, we can provide an explicit formula for the condition number \( \kappa_+ ( C^+ P_\Sigma \Lu 1 (\mc K ) P_\Sigma^\top (C^+)^\top  ) \) assuming an arbitrary subcomplex forms the preconditioner \( C \). We provide such a formula in the next \Cref{subsec:subcomplex} and discuss how to use it to construct a preconditioner via heavy collapsible subcomplex.

\begin{lemma}[On the conservation of the 1-homology red and condition {(4)}]\label{rem:condition4}
    For any  \( \mc L \) subcomplex of \( \mc K \), the following statements about 1-homology hold:
    \begin{enumerate}[label=(\roman*)]
        \item if  \( \mc L \) satisfies conditions {(1)} and {(2)} then it can only extend the kernel, i.e.\ we have   \( \ker L_1 (\mc K) \subseteq \ker L_1 ( \mc L )\);
        \item if additionally \( \mc L \) satisfies condition {(4)}, then the 
         preconditioning scheme is bijective, in the sense that the solution to the original least square problem $\min_{\b x}\|\Lu 1 (\mc K ) \b x -\b f\|$ and the one preconditioned with the Cholesky factor of \( \mc L \) coincide.
    \end{enumerate}
\end{lemma}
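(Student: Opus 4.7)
My plan is to handle the two items in sequence, leveraging the Hodge decomposition recalled in \Cref{lem:hodge_decomp} and the structural fact that conditions (1)--(2) force $B_1(\mc L) = B_1(\mc K)$ while $B_2(\mc L)$ is a column-submatrix of $B_2(\mc K)$ obtained by discarding the triangles eliminated from $\V 2$.

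For (i) the argument reduces to a one-line observation: removing columns of $B_2$ can only enlarge the kernel of its transpose, so $\ker B_2(\mc K)^\top \subseteq \ker B_2(\mc L)^\top$. Intersecting with the common $\ker B_1$ and using the identity $\ker L_1 = \ker B_1 \cap \ker B_2^\top$ (valid since $L_1$ is a sum of two symmetric positive semidefinite terms) immediately gives the inclusion $\ker L_1(\mc K) \subseteq \ker L_1(\mc L)$.

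For (ii) I would first upgrade the $L_1$-kernel equality supplied by (4) to the corresponding equality of up-Laplacian kernels. Using $\ker \Lu 1 = \ker B_2^\top = \im B_1^\top \oplus \ker L_1$ from the Hodge decomposition, together with the invariance of $B_1$, condition (4) gives $\ker \Lu 1(\mc K) = \ker \Lu 1(\mc L)$, and by orthogonal complements $\im \Lu 1(\mc K) = \im \Lu 1(\mc L) = \im C$ (permutations are absorbed harmlessly since $P_\Sigma$ is the same on both sides of the preconditioned system and $P_{\ds T}$ merely reorders columns, preserving $\im C$). Next, for any admissible $\b f \perp \ker L_1(\mc K)$, the original minimum-norm solution $\b x^\star = \Lu 1(\mc K)^+ \b f$ lies in $\im \Lu 1(\mc K)$ by the standard range properties of the Moore--Penrose pseudoinverse of a symmetric positive semidefinite matrix. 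The symmetric preconditioning sends $\b x \mapsto \b y = C^\top \b x$ and recovers $\b x$ via $\b x = (C^+)^\top \b y$, so the composed round-trip operator equals $(C^+)^\top C^\top = CC^+$, i.e.\ the orthogonal projector onto $\im C$. Condition (4), through $\im C = \im \Lu 1(\mc K)$, therefore forces this projector to act as the identity on $\b x^\star$, so the preconditioned problem returns exactly the original minimum-norm solution.

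The delicate point I expect to be the main obstacle is formalising what ``bijective'' means, since $C$ is generally not invertible. The precise statement I would adopt is that the map $\b x \mapsto C^\top \b x$ restricts to a bijection between the admissible subspace $\im \Lu 1(\mc K)$ and its image under $C^\top$, with inverse $(C^+)^\top$ on the relevant side; under (4) this suffices to match the minimum-norm solutions of the two least-square problems. Without (4), part (i) yields only the strict inclusion $\im C \subsetneq \im \Lu 1(\mc K)$, so $CC^+$ would non-trivially project $\b x^\star$ and the recovery would lose information, which is precisely why condition (4) is the minimal assumption needed on top of (1)--(3) to make the preconditioning well-posed.
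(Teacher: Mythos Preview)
Your proposal is correct and follows essentially the same route as the paper. For (i) both arguments amount to ``removing triangles removes constraints on \(\ker B_2^\top\)''; for (ii) both first upgrade \(\ker L_1(\mc K)=\ker L_1(\mc L)\) to \(\ker \Lu 1(\mc K)=\ker \Lu 1(\mc L)\) via the decomposition \(\ker B_2^\top = \im B_1^\top \oplus \ker L_1\) (the paper phrases this as the ``spectral inheritance principle'' from an external reference, whereas you extract it directly from the Hodge decomposition already stated in \Cref{lem:hodge_decomp}), and then conclude bijectivity from \(\ker C^\top = \ker \Lu 1(\mc K)\). Your added explicitness about the round-trip operator \((C^+)^\top C^\top = CC^+\) being the orthogonal projector onto \(\im C\) is a welcome clarification of what ``bijective'' means here, which the paper leaves implicit.
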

\begin{proof}
    For (i) it is sufficient to note that the elimination of the triangle \(t \in \V 2 \) lifts the restriction \( \b e_t^\top \b x = 0\) for \( \b x \in \ker L_1 (\mc K)\); hence, if \( \b x \in \ker L_1( \mc K )\), then \( \b x \in \ker L_1(\mc L)\). For (ii),  note that bijection between the systems \( \Lu k \b x = \b f \) and \( \left( C^+  \Lu k (C^+)^\top \right) C^\top \b x = C^+  \b f  \) is guaranteed by \( \ker C^\top = \ker \Lu k  = \ker B_{k+1}^\top \) (assuming \( \b x \perp \ker \Lu k\)). Then, by
      the spectral inheritance principle, {\cite[Thm.~2.7]{guglielmi2023quantifying}}, 
      \( \ker \Lu k (\mc X ) = \ker L_k (\mc X ) \oplus B_{k}^\top  \cdot \im \Lu {k-1} \).  The second part, \( B_{k}^\top  \cdot \im \Lu {k-1} \), consists of the action of \( B_{k}^\top \) on non-zero related eigenvectors of \( \Lu {k-1}\) and is not dependent on \( \V {k+1} \) (triangles, in case \( k =1 \)), hence remains preserved in the subcomplex \( \mc L\). Since by condition {(4)} \( \ker L_1(\mc K ) = \ker L_1(\mc L ) \), the same statement holds for up-Laplacians, \( \ker \Lu 1(\mc K ) = \ker \Lu 1 (\mc L) \). Since \( C \) is an exact Cholesky multiplier for \( \Lu 1 (\mc L ) \), \( \ker \Lu 1 (\mc L ) = \ker C^\top\) and \( \ker \Lu 1 (\mc K ) = \ker B_2^\top \) yielding \( \ker C^\top = \ker B_2^\top \) and bijectivity. 
\end{proof}

\subsection{Preconditioning quality by the Subcomplex}\label{subsec:subcomplex}

      Note that subcomplex \( \mc L \) is fully defined by the subset \( \ds T \) of subsampled triangles, \( \ds T \subset \V 2 \), so  \(\mc L = \V 0 \cup \V 1 \cup \ds T \)). We introduce the following matrix notation corresponding to the subsampling: 

\begin{definition}[Subsampling matrix]
     Let \( \ds T \) be a subset of triangles, \( \ds T \subset \V 2 \), then \( \Pi \) is a subsampling matrix if 
      \begin{itemize}[leftmargin = 45pt]
            \item \( \Pi \) is diagonal, \( \Pi \in \ds R^{m_2 \times m_2}\);
            \item \( ( \Pi )_{ii} = 1  \iff i \in \ds T \); otherwise, \( ( \Pi )_{ii} = 0 \).
      \end{itemize}
\end{definition}

Assuming the subset \( \ds T \) of subsampled triangles (or, equivalently, the subsampling matrix \( \Pi \)) is given, one needs only the triangle weight matrix \( \wh W\) in order to obtain \( \Lu 1(\mc L )\) and the corresponding Cholesky multiplier \( C \). 

Generally speaking, (squared) weights \( \wh W_2^2 \) of sampled triangles \( \ds T \) may differ from the original weights \( W_2^2 \). Let \( \wh W_2^2 = W_2^2 + \Delta W_2\), where \( \Delta W_2 \) is still diagonal, but entries are not necessarily positive. Then one can formulate the question of the optimal weight redistribution as the optimization problem:
\begin{equation*}
      \min_{ \Delta W_2 } \left\| \Lu 1(\mc L) - \Lu 1 (\mc K) \right\| = \min_{ \Delta W_2 } \left\| B_2 \left[ \Pi( W_2^2 + \Delta W_2 ) \Pi - W_2^2 \right] B_2^\top \right\|
\end{equation*}
Here and only here, since we manipulate weights, we use the unweighted \( B_2 \) matrix so one can have explicit access to the weight matrix \( W_2 \).

\begin{lemma}[Optimal weight choice for the subcomplex]\label{lemma:subsample_weight}
      Let \( \mc L \) be subcomplex of \( \mc K \) with fixed corresponding subsampling matrix \( \Pi \). Then the optimal weight perturbation for the subsampled triangles is  
      \begin{equation*}
            \Delta W_2 \equiv 0,
      \end{equation*}
      so the best choice of weights of subcomplex is \( \wh W_2 = W_2 \Pi \).
\end{lemma}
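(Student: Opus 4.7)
My plan is to expand the objective as a convex quadratic in the free variables of $\Delta W_2$, exploiting the disjoint-support structure enforced by the subsampling projector $\Pi$, and then verify that $\Delta W_2 = 0$ is a critical point.

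First, since $W_2$, $\Pi$, and $\Delta W_2$ are all diagonal (and hence mutually commuting), I would use $\Pi + \bar\Pi = I$ with $\bar\Pi = I-\Pi$ to rewrite
\begin{equation*}
    \Pi (W_2^2 + \Delta W_2)\Pi - W_2^2 \;=\; \Pi \Delta W_2 \Pi \;-\; \bar\Pi\, W_2^2\, \bar\Pi,
\end{equation*}
since $\Pi W_2^2 \Pi = W_2^2 \Pi = W_2^2 - W_2^2 \bar\Pi = W_2^2 - \bar\Pi W_2^2 \bar\Pi$. This decomposes the matrix inside the norm into a \emph{free} diagonal summand supported on the index set $\ds T$ of subsampled triangles and a \emph{fixed} diagonal summand (the ``subsampling defect'') supported on the complementary set $\ds T^c$.

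Second, I would apply the rank-one decomposition of \Cref{lem:decomposition_Luk} to express $B_2[\,\cdot\,]B_2^\top$ as the signed sum of rank-one terms
\begin{equation*}
    \sum_{t\in\ds T}(\Delta W_2)_{tt}\,\b e_t\b e_t^\top \;-\; \sum_{t\notin \ds T}w_2(t)\,\b e_t\b e_t^\top,
\end{equation*}
so that the squared Frobenius norm of the bracket becomes a convex quadratic $Q\big((\Delta W_2)_{tt}\big)_{t\in\ds T}$ in the free variables, with Hessian determined by the Gram matrix $B_2^\top B_2$ of triangle boundary vectors.

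Third, I would compute the first-order optimality conditions $\partial Q/\partial(\Delta W_2)_{tt} = 0$ for each $t\in\ds T$ and verify that the origin $\Delta W_2 \equiv 0$ solves them. Since $Q$ is convex, this identifies $\Delta W_2 \equiv 0$ as the global minimizer and yields $\wh W_2 = W_2 \Pi$.

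The main obstacle lies in handling the cross-term between the free and fixed parts that appears in the Frobenius expansion. Concretely, the partial derivative at the origin contains a coupling of the form $\sum_{t'\notin \ds T} w_2(t')\,(\b e_t^\top \b e_{t'})^2$ across the $\ds T$/$\ds T^c$ partition, and the key step is to argue that this coupling does not alter the critical-point structure. I would address this by exploiting the ordering convention of \Cref{fig:orientation} together with the disjoint diagonal supports of $\Pi$ and $\bar\Pi$ to show that the free-vs-fixed interaction in the quadratic $Q$ is compatible with $\Delta W_2 \equiv 0$ being stationary, whence the proof concludes by convexity.
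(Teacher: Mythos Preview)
Your approach has a genuine gap that cannot be repaired along the lines you sketch.

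First, a norm discrepancy: the paper works with the operator norm (it differentiates the top singular value $\sigma_1$), whereas you pass to the squared Frobenius norm. These are different objectives, and there is no reason their minimizers should coincide.

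Second, and more seriously, even for the Frobenius objective your third step fails. Your own expansion gives
\[
\left.\frac{\partial Q}{\partial (\Delta W_2)_{tt}}\right|_{\Delta W_2 = 0} \;=\; -2\sum_{t'\notin\ds T} w_2(t')\,(\b e_t^\top \b e_{t'})^2,
\]
and this is strictly negative whenever some triangle $t' \notin \ds T$ shares an edge with $t \in \ds T$, since then $\b e_t^\top \b e_{t'}\ne 0$ and $w_2(t')>0$. Such overlaps occur generically --- the whole point of the construction is that $\ds T$ is a proper subsampling of a connected complex. Neither the ordering convention of \Cref{fig:orientation} (which only fixes the \emph{sign} of the first nonzero entry in each column of $B_2$, not the magnitude of $\b e_t^\top\b e_{t'}$) nor the disjointness of the diagonal supports of $\Pi$ and $\bar\Pi$ (which concerns triangle indices in $\ds R^{m_2}$, not edge incidences in $\ds R^{m_1}$) makes these cross terms vanish. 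Hence $\Delta W_2 = 0$ is \emph{not} a stationary point of $Q$, and the convexity argument never gets off the ground.

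The paper's route is essentially the opposite of yours. Working with the spectral norm, it computes $\nabla_{\Delta W_2}\sigma_1 = \diag(\Pi B_2^\top \b x\,\b x^\top B_2\Pi)$ for the extreme eigenvector $\b x$, observes that this can vanish only if $\Pi B_2^\top\b x = 0$, and then (via the kernel-preservation condition $\im B_2^\top\cap\ker\Pi=\{0\}$) argues this would force $\b x\in\ker\Lu 1$, contradicting that $\b x$ realizes the largest singular value. The paper's conclusion is drawn from the \emph{absence} of any interior stationary point, not from $0$ being one; your Frobenius-quadratic strategy does not reproduce that mechanism.
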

\begin{proof}
Let \( \Delta W_2 = \Delta W_2 (t )\) where \(t \) is a time parametrization; we can compute the gradient \( \nabla_{\Delta W_2} \sigma_1 \left( \Lu 1(\mc L) - \Lu 1 (\mc K) \right) \) through the  derivative \( \frac{d}{dt} \sigma_1 \left(  \Lu 1(\mc L) - \Lu 1 (\mc K) \right)  \):
\begin{equation*}
      \begin{aligned}
            \dot{\sigma}_1 & = \b x^\top B_2 \Pi \dot{ \Delta W_2 } \Pi B_2^\top \b x = \left\langle B_2 \Pi \dot{ \Delta W_2 } \Pi B_2^\top , \, \b x \b x^\top \right\rangle = \mathrm{Tr} \left( B_2 \Pi \dot{ \Delta W_2 } \Pi B_2^\top \b x \b x^\top \right) = \\ 
            & = \left\langle \Pi B_2^\top \b x \b x^\top B_2 \Pi , \, \dot{ \Delta W_2 } \right\rangle = \left\langle \nabla_{\Delta W_2} \sigma_1, \dot{\Delta W_2 } \right\rangle 
      \end{aligned}
\end{equation*}
By projecting onto the diagonal structure of the weight perturbation, 
\begin{equation*}
      \nabla_{\diag \Delta W_2 } \, \sigma_1 = \diag \left( \Pi B_2^\top \b x \b x^\top B_2 \Pi  \right).      
\end{equation*}
Note that \( \diag \left( \Pi B_2^\top \b x \b x^\top B_2 \Pi  \right)_{ii} = |  \Pi B_2^\top \b x |^2_i \); then the stationary point is characterized by \(  \Pi B_2^\top \b x = 0 \iff \b x \in \ker \Lu 1\). The latter is impossible since \( \b x \) is the eigenvector corresponding to the largest eigenvalue; hence, since \( \Pi (W_2^2 + \Delta W_2 ) \Pi \ne  W_2^2 \), the optimal perturbation is \( \Delta W_2 \equiv 0 \). 
\end{proof}

      We have established the optimal choice of the weights provided the subsampling matrix \( \Pi \). As a result, assuming \( B_2 \) is weighted, optimal \( \Lu 1 (\mc L) = B_2 \Pi B_2^\top \). Now we proceed to characterize the quality of the preconditioning in terms of the matrix \( \Pi \) in \Cref{thm:cond_weight}, starting with a necessary technical relation between \(\im B_2^\top\) and \( \ker \Pi\).

\begin{remark}
      \label{rem:ker_im}
     Knowing the optimal weight for sampled triangles from \Cref{lemma:subsample_weight}, one needs to preserve the kernel of subsampled Laplacian
      \[
            \ker \left( B_2 \Pi B_2^\top \right) = \ker \left(  B_2 B_2^\top \right) 
      \]
      to form a correct preconditioner \( C \). 
Since we have \( \Pi = \Pi^2 \) and \( \ker \Lu 1 = \ker B_2^\top \), then \( \ker \left(   B_2 \Pi B_2^\top \right) = \ker \left( \Pi B_2^\top \right) \). Additionally, \( \ker B_2^\top \subseteq \ker \left( \Pi B_2^\top \right)  \), so \(  \ker \left(  B_2 \Pi B_2^\top \right) \ne \ker \left(  B_2 B_2^\top \right) \iff \) there exists \( \b y \in \im B_2^\top\) such that \( B_2^\top \b y \ne 0 \) and \( B_2^\top \b y \in \ker \Pi\). Then in order to preserve the kernel, one needs 
\( \im B_2^\top \cap \ker \Pi = \{ 0 \} \). 
\end{remark}

\begin{theorem}[Conditioning by the Subcomplex]
      \label{thm:cond_weight}
      Let \( \mc L \) be a weakly collapsible subcomplex of \( \mc K \) defined by the subsampling matrix \( \Pi \) and let \( C \) be a Cholesky multiplier of \( \Lu 1 (\mc L) \) defined as in \Cref{lem:exact}. Then
             the conditioning of the symmetrically preconditioned \( \Lu 1 \) is given by:
            \begin{equation*}
                  \kappa_+ \left( C^+ P_\Sigma \Lu 1 P_\Sigma^\top \left( C^{+}\right)^\top \right) = \left( \kappa_+ \left( \left( S_1 V_1^\top \Pi \right)^+ S_1 \right) \right)^2 = \left( \kappa_+ ( \Pi V_1 ) \right)^2, 
            \end{equation*}
            where \( V_1 \) forms the orthonormal basis on \( \im B_2^\top \).
\end{theorem}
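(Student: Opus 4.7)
The plan is to reduce the condition number on the left-hand side to a symmetric eigenproblem involving $D := S_1 V_1^\top \Pi V_1 S_1$, and then show, by two separate algebraic manipulations, that this eigenproblem coincides with the two quantities on the right-hand side. Throughout, set $\tilde B = P_\Sigma B_2$ and use the thin SVD $\tilde B = U_1 S_1 V_1^\top$; since $P_\Sigma$ is a permutation, $V_1$ indeed spans $\im B_2^\top$, matching the statement.

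First I would observe that, because $\left(C^+\tilde B\right)\left(C^+\tilde B\right)^\top = C^+ P_\Sigma \Lu 1 P_\Sigma^\top (C^+)^\top$, the nonzero eigenvalues of the preconditioned matrix equal the squared nonzero singular values of $C^+\tilde B$. So the goal becomes computing $\kappa_+(C^+\tilde B)^2$. Next, since $CC^\top = \tilde B \Pi \tilde B^\top = U_1 D U_1^\top$ and $C$ is a Cholesky factor with full column rank equal to $\mathrm{rank}(\tilde B)$ (using the bijection condition from \Cref{rem:ker_im}, which guarantees $V_1^\top \Pi V_1$ is positive definite and hence $D$ is invertible), one has $\im C = \im U_1$, so $C = U_1 R$ for some invertible square $R$, and $RR^\top = D$ forces $R = D^{1/2} O$ with $O$ orthogonal. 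Thus $C^+ = O^\top D^{-1/2} U_1^\top$ and
\begin{equation*}
      C^+ \tilde B \;=\; O^\top D^{-1/2} S_1 V_1^\top .
\end{equation*}
Taking the outer product and using orthogonality of $O$ and $V_1$ gives $\kappa_+(C^+\tilde B)^2 = \kappa_+\!\left(D^{-1/2} S_1^2 D^{-1/2}\right)$. By the standard similarity $z = S_1 y$, this equals $\kappa_+(S_1 D^{-1} S_1)$, and since $S_1$ is diagonal and invertible on $\im V_1$,
\begin{equation*}
     S_1 D^{-1} S_1 \;=\; S_1\left(S_1 V_1^\top \Pi V_1 S_1\right)^{-1} S_1 \;=\; (V_1^\top \Pi V_1)^{-1}.
\end{equation*}

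To conclude, it remains to match this with the two right-hand expressions. For $\kappa_+(\Pi V_1)^2$, compute $(\Pi V_1)^\top(\Pi V_1) = V_1^\top \Pi^2 V_1 = V_1^\top \Pi V_1$ (using $\Pi^2=\Pi$), which gives $\kappa_+(\Pi V_1)^2 = \kappa_+(V_1^\top \Pi V_1) = \kappa_+\!\left((V_1^\top \Pi V_1)^{-1}\right)$, matching the derived expression. For the factor $\kappa_+\!\left((S_1V_1^\top\Pi)^+ S_1\right)^2$, set $A := S_1 V_1^\top \Pi$ and note that $AA^\top = D$ is invertible, so $A$ has full row rank and $A^+ = A^\top (AA^\top)^{-1}$, hence $(A^+)^\top A^+ = (AA^\top)^{-1} = D^{-1}$. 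Therefore $(A^+ S_1)^\top(A^+ S_1) = S_1 D^{-1} S_1 = (V_1^\top \Pi V_1)^{-1}$ as above, whose condition number is $\kappa_+(\Pi V_1)^2$. Both right-hand sides thus agree with $\kappa_+(C^+ P_\Sigma \Lu 1 P_\Sigma^\top (C^+)^\top)$.

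The main obstacle I expect is the clean derivation of $C = U_1 D^{1/2} O$: this requires carefully using both the bijectivity hypothesis (to ensure $D$ is positive definite and the ranks match so that $R$ is an invertible square matrix) and the fact that any two full-column-rank factorizations of the same PSD matrix differ by an orthogonal factor on the right. Once that is in place, the remaining steps are routine manipulations with the SVD, the identity $\Pi^2 = \Pi$, and the pseudoinverse of a full-row-rank matrix.
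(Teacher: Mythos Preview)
Your argument is correct and reaches the same conclusion, but the route differs from the paper's. The paper substitutes the explicit form \(C = P_\Sigma B_2 \Pi P_{\ds T}\), strips the orthogonal factors to reduce the preconditioned operator to \((S_1 V_1^\top \Pi)^+ S_1 V_1^\top\), and then analyzes \(\sigma_{\min}^+,\sigma_{\max}^+\) directly as Rayleigh quotients: a careful kernel computation (using \(\im B_2^\top \cap \ker \Pi = \{0\}\) from \Cref{rem:ker_im}) shows the admissible \(\b x\) range over \(\im V_1\), and two changes of variable \(\b x = V_1\b y\), \(\b z = S_1\b y\) reduce each extreme singular value to that of \(\Pi V_1\). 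By contrast, you never touch the explicit \((B_2\Pi)^+\); you use only the relation \(CC^\top = U_1 D U_1^\top\) with \(D=S_1 V_1^\top\Pi V_1 S_1\), extract a polar-type factorization \(C=U_1 D^{1/2}O\), and collapse everything to the invertible \(r\times r\) block \((V_1^\top\Pi V_1)^{-1}\). Your approach is more algebraic and arguably cleaner, since it avoids the kernel bookkeeping and the variational argument; the paper's approach, on the other hand, makes the intermediate quantity \((S_1V_1^\top\Pi)^+ S_1\) appear naturally rather than being verified post hoc. One small point to tighten: as written, \(C=P_\Sigma B_2 \Pi P_{\ds T}\) carries zero columns for the triangles outside \(\ds T\), so ``full column rank'' and ``\(O\) orthogonal'' should be read after discarding those columns (equivalently, \(O\) is a co-isometry with \(OO^\top=I\)); your subsequent outer-product step only uses \(OO^\top=I\), so nothing changes.
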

\begin{proof}
By \Cref{lemma:subsample_weight}, \(W_2( \mc L ) = \Pi W_2\); then let us consider the lower-triangular preconditioner  \( C =  P_\Sigma B_2 \Pi P_{\ds T}  \) for \( P_\Sigma \Lu 1 P_\Sigma^\top  \); then the preconditioned matrix is given by:
\begin{equation*}
      \begin{aligned}
            C^+ \left( P_\Sigma \Lu 1 P_\Sigma^\top \right) (C^+)^{\top} & = \left( P_\Sigma B_2 \Pi P_{\ds T} \right)^+ \left( P_\Sigma \Lu 1 P_\Sigma^\top \right) \left(\left( P_\Sigma B_2 \Pi P_{\ds T} \right)^+\right)^{\top} = \\
            & = P_{\ds T}^\top \left( B_2 \Pi \right)^+ 
              \Lu 1 \left(\left( B_2 \Pi  \right)^+\right)^{\top} P_{\ds T}
      \end{aligned}
\end{equation*}
Note that \( P_{ \ds T } \) is unitary, so \( \kappa_+ ( P_{ \ds T } X P_{ \ds T }^\top ) = \kappa_+ ( X ) \), hence the principle matrix is \( \left( B_2 \Pi \right)^+ \Lu 1 \left(\left( B_2 \Pi \right)^+\right)^{\top } = \left( B_2 \Pi \right)^+ B_2  B_2^\top \left(\left( B_2 \Pi  \right)^+\right)^{\top}  \).
Since  \( \kappa_+ ( X^\top X ) = \kappa_+^2( X ) \), then in fact one needs to consider $\kappa_+ \left( \left( B_2 \Pi \right)^+ ( B_2  ) \right)$. 
Let us consider the SVD-decomposition for \(  B_2 = U S V^\top \); more precisely,
\begin{equation*}
      B_2  = U S V^\top = \begin{mt} U_1 & U_2 \end{mt} \begin{mt} S_1 & 0 \\ 0 & 0 \end{mt} \begin{mt} V_1^\top \\ V_2^\top \end{mt} = U_1 S_1 V_1^\top
\end{equation*}
where \( S_1 \) is a diagonal invertible matrix. Note that \( U \) and \( U_1 \) have orthonormal columns and \( S_1 \) is diagonal and invertible, so
\begin{equation*}
      (  B_2 \Pi )^+  B_2 = \left( S V^\top \Pi \right)^+ S V^\top = \left( S_1 V_1^\top \Pi \right)^+ S_1 V_1^\top 
\end{equation*}
By the definition of the condition number \( \kappa_+ \), one needs to compute \( \sigma^+_{\min}\) and \( \sigma^+_{\max}\) where:
\begin{equation*}
      \sigma^+_{\min} = \underset{ \b x \perp \ker \left( \left( S_1 V_1^\top \Pi \right)^+ S_1 V_1^\top \right) }{\min} \frac{ \left\| \left( S_1 V_1^\top \Pi \right)^+ S_1 V_1^\top \b x \right\| }{ \| \b x \| }
\end{equation*}
and analogously for \( \sigma^+_{\max} \). Here we prove the result for \( \sigma^+_{\min} \). The proof transfers unchanged to \( \sigma^+_{\max}\) by changing \( \min \) to \(\max\).

Note that \( \im B_2^\top = \im V_1 = \im V_1 S_1  \), so by \Cref{rem:ker_im}, \( \ker \Pi \cap \im V_1 S_1 = \{ 0 \} \), hence \( \ker \Pi V_1 S_1 = \ker V_1 S_1 \). Since \(\ker V_1 S_1 \cap \im S_1 V_1^\top = \{0\} \), one gets \(\ker \Pi V_1 S_1 \cap \im S_1 V_1^\top = \{0\} \). By the properties of the pseudo-inverse we have that $ \ker \Pi V_1 S_1 = \ker \left( S_1 V_1^\top \Pi \right)^\top = \ker \left( S_1 V_1^\top \Pi \right)^+ $; as a result, \( \ker \left( \left( S_1 V_1^\top \Pi \right)^+ S_1 V_1^\top \right) = \ker S_1 V_1^\top \). Since \( S_1 \) is invertible, \( \ker \left( \left( S_1 V_1^\top \Pi \right)^+ S_1 V_1^\top \right) = \ker V_1^\top \).
 
For \( \b x \in \ker V_1^\top \Rightarrow \b x \in \im V_1 \), so \( \b x = V_1 \b y \). Since \( V_1^\top V_1 = I \), \( \| \b x \| = \| V_1 \b y \| \) and:
\begin{equation*}
      \sigma^+_{\min} =  \underset{ \b y }{\min } \frac{ \left\| \left( S_1 V_1^\top \Pi \right)^+ S_1 \b y  \right\|  }{ \| \b y \| } \underset{ \b z = S_1 \b y }{ = } \underset{ \b z }{\min} \frac{ \left\| \left( S_1 V_1^\top \Pi \right)^+ \b z \right\|  }{ \left\| S_1^{-1} \b z \right\| }
\end{equation*}
Note that \( \b v = \left( S_1 V_1^\top \Pi \right)^+ \b z \iff \begin{cases} S_1 V_1^\top \Pi \b v = \b z \\ \b v \perp \ker S_1 V_1^\top \Pi  \end{cases} \) and \( \ker S_1 V_1^\top \Pi = \ker V_1^\top \Pi \), so:
\begin{equation*}
      \sigma^+_{\min } = \underset{ \b v \perp \ker V_1^\top \Pi }{\min } \frac{ \| \b v \| }{ \| V_1^\top \Pi \b v \|  }
\end{equation*}
Hence \( \kappa_+ \left( C^+ P_\Sigma \Lu 1 P_\Sigma^\top (C^+)^{ \top} \right) = \kappa_+^2 ( V_1^\top \Pi ) = \kappa_+^2 ( \Pi V_1 )\).

\end{proof}

\subsection{Algorithm: Preconditioner via Heavy Collapsible Subcomplex}

      Following \Cref{thm:cond_weight}, note that
    \( C \) is a perfect preconditioner for \( P_\Sigma \Lu 1 P_\Sigma^\top\)
      if \( \Pi = I \), since \( \kappa_+(V_1) = 1 \) and we would compute the exact Cholesky decomposition. However, this is computationally prohibitive. 
      Thus, it is natural to try to find a sparser \( \Pi \) so that \( \Pi V_1 \) is as close to \( V_1 \) as possible. Multiplication by \( \Pi \) cancels rows in \( V_1 \) corresponding to the eliminated triangles; at the same time, rows in \( V_1 \) are scaled by the weights of the triangles since \( \mathrm{span} V_1 = W_2 \im B_2^\top \), so we expect to  \( \Pi \) eliminating triangles with lowest weights to be a good choice.

      Based on this observation and \cref{thm:cond_weight}, we provide an algorithm for preconditioning \( \Lu 1(\mc K )\) which aims to eliminate triangles of the lowest weight thus constructing what we call a heavy weakly collapsible subcomplex \( \mc L \) with largest possible total weight of triangles. The exact Cholesky multiplier \( C \) of \( \Lu 1(\mc L ) \) is cheap to compute and is used as a preconditioner for \( \Lu 1(\mc K )\).

The proposed \Cref{algo:heavy_subsampling} works as follows: start with an empty subcomplex \( \mc L \); then, at each step try to extend \( \mc L \) with the heaviest unconsidered triangle \( t\): \( \mc L \to \mc L \cup \{ t \} \) -- here the extension includes the addition of the triangle \(t\) with all its vertices and edges to the complex \(\mc L\). If the extension \( \mc L \cup \{ t \} \) is weakly collapsible, it is accepted as the new \( \mc L \), otherwise \( t \) is rejected; in either case the triangle \(t\) is removed from the list of unconsidered triangles, i.e. $t$ is not considered for a second time.

\begin{algorithm}[h]
      \caption{ \texttt{HEAVY\_SUBCOMPLEX}\( (\mc K, W_2) \): construction a heavy collapsible subcomplex
      \label{algo:heavy_subsampling}}
      \begin{algorithmic}[1]
            \Require the original complex \( \mc K \), weight matrix \( W_2 \)
            \State \( \mc L  \gets \vn, \, \ds T \gets \vn  \) \Comment{initial empty subcomplex}
            \While{ there is unprocessed triangle in \( \V 2 \) }
                  \State \( t \gets \texttt{nextHeaviestTriangle}(\mc K, W_2) \) \Comment{e.g. iterate through \( \V 2 \) sorted by weight}
                  \If{ \(\mc L \cup \{ t \} \) is weakly collapsible } \Comment{\small run \texttt{GREEDY\_COLLAPSE}\( (\mc L \cup \{ t\} ) \) (\Cref{algo:greedy})}
                        \State \( \mc L \gets \mc L \cup \{ t \},\, \ds T \gets [ \ds T \; t ]  \) \Comment{ extend \( \mc L \) by \( t \)}
                  \EndIf
            \EndWhile
             \State \Return \( \mc L, \, \ds T, \, \Sigma \) \Comment{here \( \Sigma \) is the collapsing sequence of \( \mc L \) }
      \end{algorithmic}
\end{algorithm}

\begin{remark}
      We show next that a subcomplex \( \mc L \) sampled with \Cref{algo:heavy_subsampling} satisfies properties (1)--(4) above: indeed, \( \V 0 = \mc V_0(\mc L)\), \( \V 1 = \mc V_1(\mc L) \) and \( \mc L \) is weakly collapsible by construction. It is less trivial to show that the subsampling \( \mc L \) does not increase the dimensionality of \(1\)-homology (see \Cref{rem:condition4}).

      Assuming the opposite, the subcomplex \( \mc L \) cannot have any additional \(1\)-di\-men\-sio\-nal holes in the form smallest-by-inclusion cycles of more than \( 3 \) edges: since this cycle is not present in \( \mc K \), it is ``covered'' by at least one triangle \( t \) which necessarily has a free edge, so \( \mc L \) can be extended by \(t\) and remain weakly collapsible. Alternatively, if the only additional hole corresponds to the triangle \( t \) not present in \( \mc L \); then, reminiscent of the proof for \Cref{thm:poly}, let us consider the minimal by inclusion simplicial complex \( \mc K \) for which it happens. Then the only free edges in \( \mc L \)  are the edges of \( t \), otherwise \( \mc K \) is not minimal. At the same time, in such setups \( t\) is not registered as a hole since it is an outer boundary of the complex \( \mc L \), e.g. consider the exclusion of exactly one triangle in the tetrahedron case, \Cref{fig:2-core}, which proves that \( \mc L \) cannot extend the 1-homology of \( \mc K \). 
\end{remark}

The complexity of \Cref{algo:heavy_subsampling} is \( \mc O( m_1 m_2 ) \) at worst which could be considered comparatively slow: the algorithm passes through every triangle in \( \V 2 \) and performs collapsibility check via \Cref{algo:greedy} on \( \mc L \) which never has more than \( m_1 \) triangles since it is weakly collapsible. Note that 
 \Cref{algo:heavy_subsampling} and \Cref{thm:cond_weight} do not depend on \( \mc K \) being a 2-Core; moreover, the collapsible part of a generic \( \mc K \) is necessarily included in the subcomplex \( \mc L \) produced by \Cref{algo:heavy_subsampling}. Hence a prior pass of \texttt{GREEDY\_COLLAPSE}\((\mc K)\) reduces the complex to a smaller 2-Core \(\mc K'\) with faster \texttt{HEAVY\_SUBCOMPLEX}\((\mc K', W_2) \) since \( \mc V_1(\mc K') \subset \V 1 \) and \( \mc V_2(\mc K') \subset \V 2 \).

We summarise the whole procedure for computing the preconditioner next:
\begin{itemize}
    \item reduce a generic simplicial complex \( \mc K \) to a 2-Core \( \mc K'\) through the collapsing sequence \( \Sigma_1 \) and the corresponding sequence of triangles \( \ds T_1 \) through the greedy \Cref{algo:greedy};
    \item form a heavy weakly connected subcomplex \( \mc L \) from \( \mc K'\) with the collapsing sequence \( \Sigma_2 \) and the corresponding sequence of triangles \( \ds T_2 \) using \Cref{algo:heavy_subsampling};
    \item form the preconditioner \( C \)  by permuting and subsampling \( B_2 \) using the subset of triangles \( \ds T =  \ds T_1 \cup \ds T_2 \) (that determines the subsampling matrix \( \Pi \)) and the associated collapsing sequence \( \Sigma =  \Sigma_1 \cup \Sigma_2  \), via \( C = P_\Sigma B_2 \Pi P_{\ds T} \).
\end{itemize}

\begin{figure}[htbp]
\centering
\scalebox{0.8}{
	\begin{tikzpicture}
		\node[align=center] at (0, -0.75){ original \\ complex \( \mc K \) };
		\node[align=center] at (4, -0.75){2-Core \( \mc K' \) \\ \( \mc K' \subset \mc K  \) };
		\node[align=center, draw=red, inner sep = 1pt] at (12, -0.75){weakly collapsible \\ heavy subcomplex \( \mc L \)};
		\draw[-latex, line width=1.5] (5, -0.75)  -- node[midway, below, align=center]{\small \texttt{HEAVY\_SUBCOMPLEX}\((\mc K', W_2) \) \\ \Cref{algo:heavy_subsampling} } (10, -0.75);
		\draw[-latex, line width=1.5] (1.15, -0.75) -- (3, -0.75);
		\node[align=center, draw=red, inner sep=1pt] at (4.0, 0.75) {\small \texttt{GREEDY\_COLLAPSE}(\( \mc K \)) \\ \small \Cref{algo:greedy} };
            \node[red, align = center,] at (10, 1.25) {projection matrix \( \Pi \) \\ preconditioner \( C \)};
            \draw[-latex, red, dashed, thick] (5.675, 0.45)  -| (10, 0.75);
            \draw[-, red, dashed, thick] (12, -0.25) |- (10, 0.45 );
            \draw[-, red, dashed, thick] (1.75, -0.6) |- (2.5, 0.45 );
	\end{tikzpicture}}
	\caption{ The scheme of the simplicial complex transformation: from the original \( \mc K \) to the heavy weakly collapsible subcomplex \( \mc L \). \label{fig:scheme}}
\end{figure}
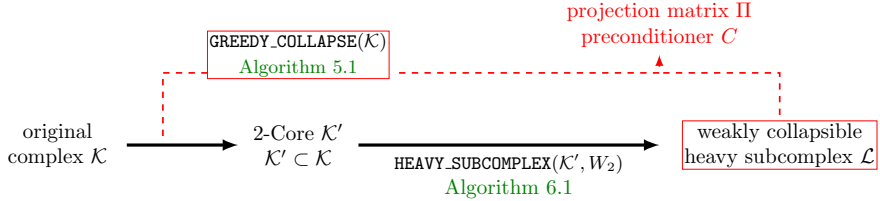

We refer to the preconditioner built in this way (see also \Cref{fig:scheme}) as a \emph{heavy collapsible subcomplex} preconditioner (\algname-preconditioner).

\section{Numerical experiments} \label{sec:experiments}
We present here a number of numerical experiments to validate the performance of the proposed preconditioning strategy. All the experiments are run using {\tt julia} on Apple M1 CPU and can be reproduced with the code available at \url{https://github.com/COMPiLELab/HeCS}.

\subsection{Conjugate Gradient Least Square Method} 
The preconditioning performance for the least square problem
\( \min_{\b x \perp \ker \Lu 1}\| \Lu 1 \b x - \b f  \| \)
is measured on the \emph{conjugate gradient least square method} (CGLS), \cite{bjorck1998stability, hestenes1952methods}. The method requires computing \texttt{matvec} operations for the matrix \( \Lu 1 \) and its preconditioned alternatives; CGLS converges as $(\sqrt{ \kappa_+( A ) } - 1)/(\sqrt{ \kappa_+( A ) } + 1 )$ and we run it until the infinity norm of the residual \( \b r_i = \Lu 1 \b x_i - \b f\) falls below a given threshold, i.e.~\( \| \b r_i \|_\infty \le \epsilon \).

\subsection{Shifted incomplete Cholesky preconditioner}
$\Lu 1$ is a singular matrix. Assuming \( U \) is an orthogonal basis of \( \ker \Lu 1\), one can move to \( \Lu 1 \to \Lu 1 + \alpha U U^\top \) which can be preconditioned by non-singular methods. Specifically, we use \( C_\alpha = \texttt{ichol} (  \Lu 1 + \alpha U U^\top  ) \) to compare with the \algname~preconditioner, \Cref{fig:scheme}.

Calculating such a shift requires efficiently computing \( \ker \Lu 1 \), which in principle has a complexity comparable to the original system. On the other hand, in our setting, given the spectral inheritance principle (see e.g. \cite[Thm. 2.7]{guglielmi2023quantifying}), an orthogonal basis \( U \) can be formed directly using the vectors \( B_1^\top \b x \), \( \b x \in (\bf 1 )^\perp \), when \( \mc K \) has trivial 0- and 1-homologies (i.e.\ it is formed by one connected component, \( \ker L_0 =\mathrm{span} \{ \bf 1 \} \), and has no 1-dimensional holes, \( \ker L_1 = 0 \)). 

Note that the \algname{} preconditioner instead works without requiring any triviality of the topology of the complex.

\subsection{Problem setting: Enriched triangulation as a simplicial complex} 

To illustrate the behaviour of the preconditioned system \( C^+ P_\Sigma \Lu 1 P_\Sigma^\top (C^+)^{\top} \), we consider a sparse simplicial complex \( \mc K \), i.e.\ we assume \( m_2 = \mc O (m_1 \ln m_1 ) \). Note that the developed routine can be applied in denser cases, although one can expect a certain loss of efficiency. 
To generate problem settings within this range, \( \mc K \) is synthesized as an enriched triangulation of \( N \) points on the unit square with a prescribed edge sparsity pattern \( \nu \) as follows:
\begin{enumerate}[label=(\arabic*),leftmargin=*]
      \item \( \V 0 \) is formed by the corners of the unit square and \( (N-4)\) points sampled uniformly at random from \( U \left( [0, 1]^2 \right) \);
      \item the Delaunay triangulation of \( \V 0 \) is computed; all edges and \(3\)-cliques of the produced graph are included in \( \V 1 \) and \( \V 2 \) respectively;
      \item \( d \ge 0 \) edges (excluding the outer boundary) are chosen at random and eliminated from \( \V 1 \); triangles adjacent to the chosen edges are eliminated from \( \V 2 \). As result, produced complex \( \mc K \) has a non-trivial \(1\)-homology;
      \item the  sparsity pattern is defined as \( \nu = m_2 / q(m_1) \)  where  \( q(m_1) = \mc O(m_1 \ln m_1)\) is the highest density of triangles for the sparse case; 
      additional edges on \( \V 0 \) are added to \( \V 1 \) alongside with new appearing \( 3 \)-cliques to reach the target \( m_2 / q(m_1) \) value. The initial sparsity pattern of the triangulation is denoted by \( \nu_\Delta \).
\end{enumerate}

\subsection{Heavy subcomplex and triangle weight profile} 
\Cref{algo:heavy_subsampling}, aims to build a heavy weakly collapsible subcomplex \( \mc L \) such that the total weight of triangles in \( \mc L \) is close to the total weight of triangles in the original complex \( \mc K \). At the same time, the number of triangles in \( \mc L \) is limited, \( m_2(\mc L) < m_1(\mc L) = m_1(\mc K) \), due to the weak collapsibility, while the number of triangles in \( \mc K \) can go up to \( q(m_1(\mc K )) \).
Hence, the quality of the preconditioner 
is determined by the triangle weight distribution \( w(\cdot )\) on \( \V 2 \): 
namely, if \( w_2(t) \) are distributed uniformly and independently, the quality of the preconditioning falls rapidly after \( \nu > \nu_\Delta \) and, at the same time, the original matrix \( \Lu 1 \) remains well-conditioned in such configurations. Instead, we observe that unbalanced weight distributions lead to ill-conditioned \( \Lu 1 \) and, thus, here we consider the following two situations:
\begin{itemize}[leftmargin = *]
      \item the weights of triangles are random variables that are \emph{independent} of each other and distributed as heavy-tailed Cauchy distributions or bi-modal Gaussian distribution \( \mc N( 1, \sigma_1 ) + \mc N( 1/3, \sigma_2 ) \). This way we generate a sufficiently large number of heavy triangles and a cluster of reducible triangles with small~\( w_2(t) \);
      \item the weights of triangles are \emph{topologically dependent}, that is \( w_2(t)  \) is a function of the weights of the neighboring triangles. 
      A way to implement this dependence is to set  $ w_2(t) = f(w_1(e_1), w_1(e_2), w_1(e_3))$ for the triangle \( t = (e_1, e_2, e_3 ) \). Two well-known common choices of \( f \) are the min-rule $ w_2(t) = \min(w_1(e_1), w_1(e_2), w_1(e_3))$, \cite{guglielmi2023quantifying,lee2019coidentification}, and the product rule \( w_2(t) = w_1(e_1)  w_1(e_2)  w_1(e_3)\), \cite{chen2021decomposition, chen2021helmholtzian}. In this way, the edge weight profile \( w_1(\cdot)\) on \( \V 1\) is transformed to an unbalanced distribution \( w_2(t)\).
\end{itemize}

\subsection{Timings} 
One needs to separately discuss the time cost of the computation of the preconditioning operator (as a tuplet \( C\) and the permutation \( P_\Sigma \)) and of the \texttt{matvec} computation for the preconditioned operator \( C^+ P_\Sigma \Lu 1 P_\Sigma^\top (C^+)^{\top} \). 

Note that \texttt{matvec} of \( \Lu 1 \) has the complexity of the number of non-zero elements, \( \mc O( m_2 ) \); the \algname~preconditioner \( C \) has \(\le  3 m_1\) non-zero elments and lower triangular structure, so \texttt{matvec}s of either \( C^+ \) and \( (C^+)^{\top}\), as well as the permutation matrix \( P_\Sigma \), have  complexity  \( \mc O(m_1)\). Hence, the complexity of each preconditioned CGLS iteration is \( \mc O(m_2 + m_1)\), as opposed to the original \( \mc O(m_2)\); asymptotically one expects \( m_2 \gg m_1 \), so the preconditioning scheme is efficient. On the other hand, the shifted \texttt{ichol} preconditioner \( C_\alpha\) loses the sparsity due to the shift; as a result, the application of \( C_\alpha^+ \) costs \( \mc O(m_0 m_1 + m_2) \)  since \( \mathrm{rank}\, U = \mc O(m_0) \) in all considered scenarios. 

In  \Cref{fig:time_single} we compare the performance of the two preconditioners for the enriched triangulation on \(m_0=32\) vertices and a varying number of edges \(m_1\): the cost of one CGLS iteration for \algname~preconditioner is higher than the original system but asymptotically approaches the \texttt{matvec} cost of the unpreconditioned system, whilst the \texttt{ichol}-preconditioner \( C_\alpha \) is an order larger.

\begin{figure}
     \centering
     \begin{subfigure}[b]{0.49\textwidth}
         \centering
         \hspace{-3em}
         \includegraphics[width=1.0\columnwidth]{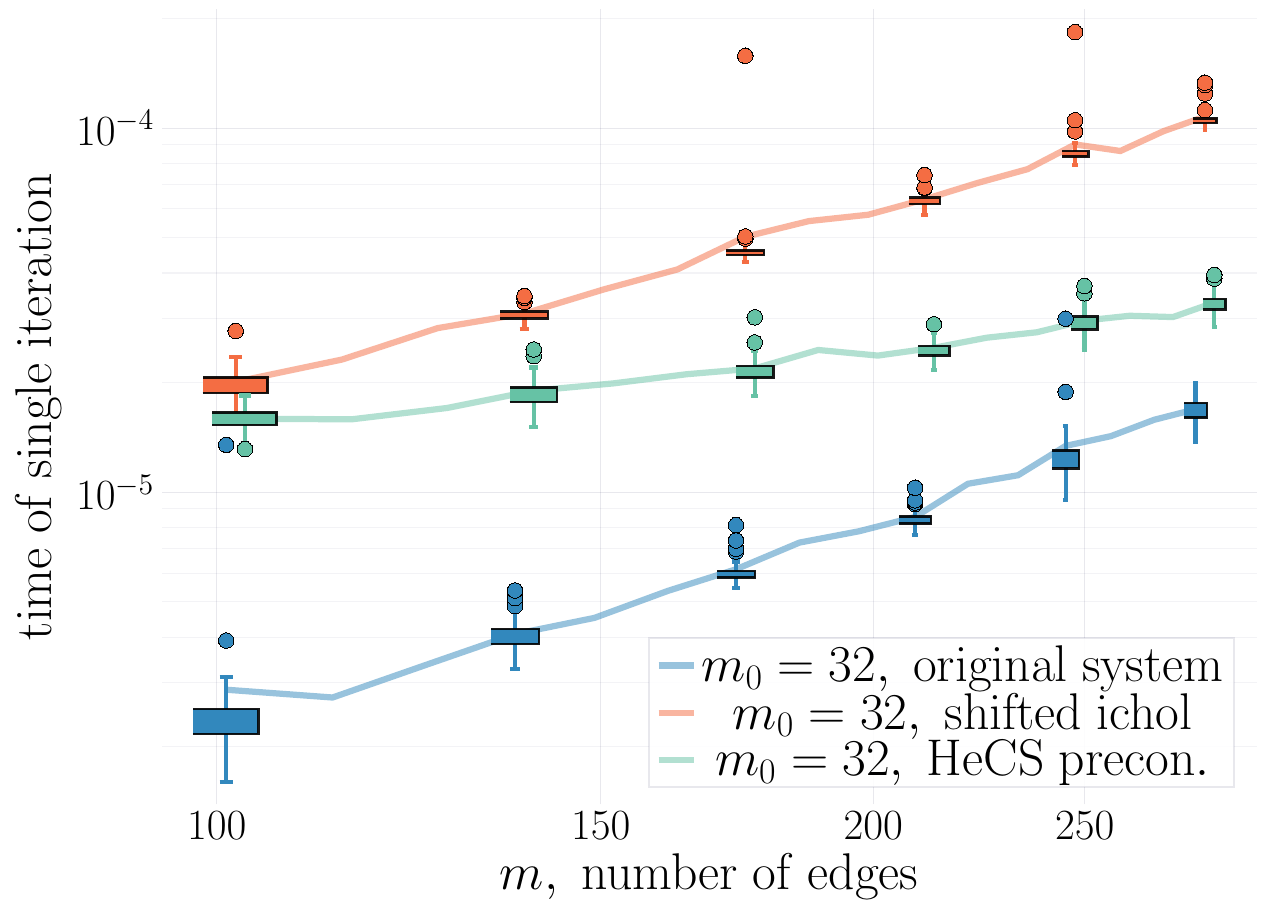}
         \caption{\small Single iteration timing: the average time of \texttt{matvec} computation for the original system (blue), shifted \texttt{ichol} (orange) and \algname~ preconditioner (green).}
         \label{fig:time_single}
     \end{subfigure}
     \hfill
     \begin{subfigure}[b]{0.49\textwidth}
         \centering
         \hspace{-2em}
         \includegraphics[width=1.0\columnwidth]{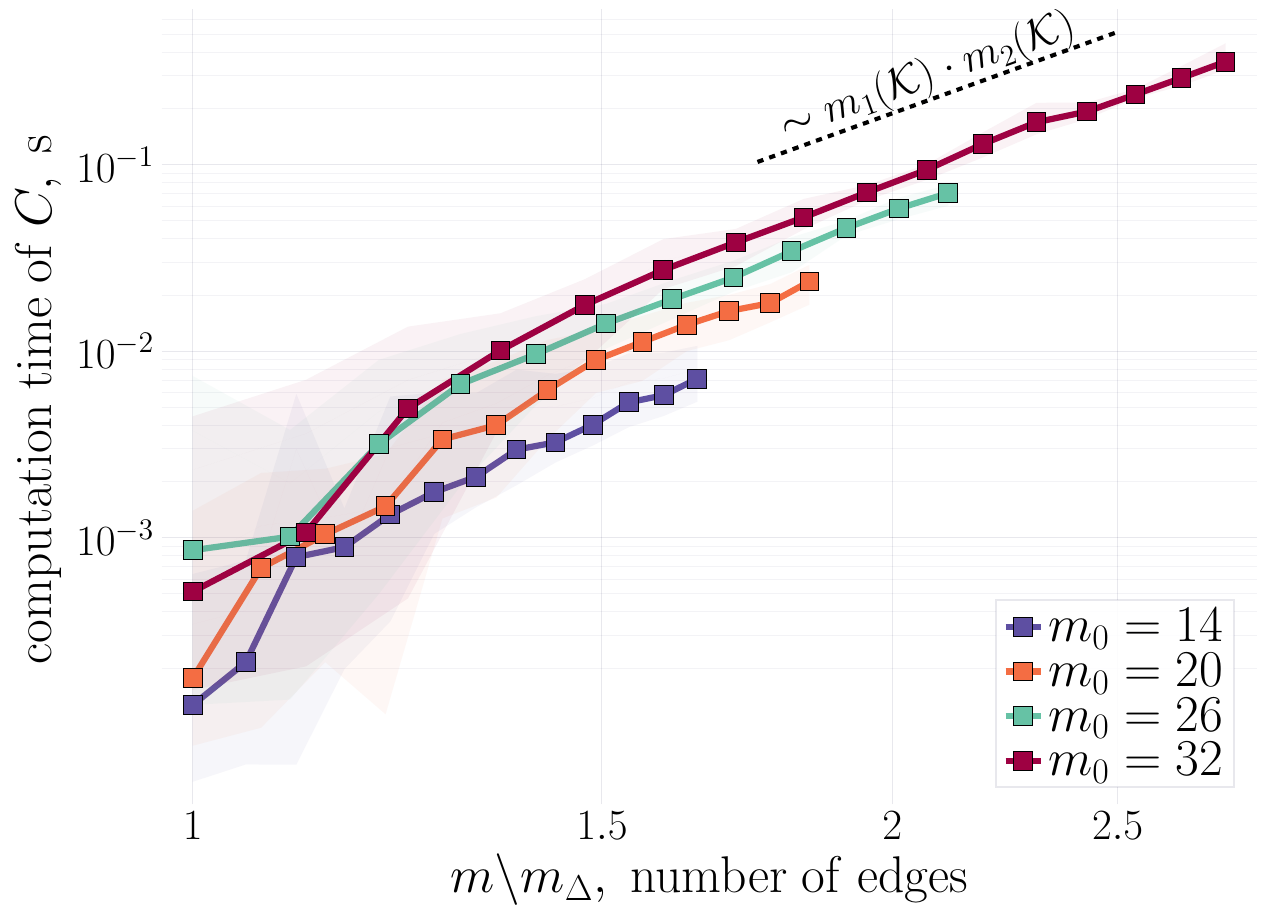}
         \caption{\small Computation time for the heavy subcomplex preconditioner in case of enriched triangulations on \( m_0 \) vertices }
          \label{fig:time_precon}
     \end{subfigure}
        \caption{Timings of \algname-perconditioner}
        \label{fig:timings}
\end{figure}

Additionally, we demonstrate the time complexity for the computation of the \algname~preconditioner  for enriched triangulation on \( m_0 = 14, 20, 26, 32 \) vertices and varying total edge number \( m_1 \), \Cref{fig:time_precon} (here \( m_\Delta \) denotes the number of edges in the initial triangulation); as \( \mc K \) becomes a denser simplicial complex, \texttt{HEAVY\_COMPLEX}(\( \mc K \)) follows the expected complexity \( \mc O( m_1 \cdot m_2 )\). 
 In comparison with the complexity of Cholesky decomposition which is \( \mc O(m_1^3) \), in the sparse case \( m_2 = \mc O ( m_1 \ln m_1 ) \) and the overall cost of \algname{} computation is always upper-bounded by \( \mc O( m_1^2 \ln(m_1) )\).

\subsection{Performance of the preconditioner}
We demonstrate the quality of \algname~preconditioner for enriched triangulations on \(m_0\) vertices with \(d = 2\) initially eliminated edges and for varying total number of edges \( m_ 1\) such that the initial sparsity \( \faktor{ m_2 }{ q(m_1) } \) is increased until the induced number of triangles \(m_2\) reaches \(q(m_1) = 9 C^2 m_1 \ln (4 m_1) \) with \( C = \frac{1}{2} \). This quantity is chosen in accordance to \cite{osting2017spectral, spielman2008graph}, where it is shown to be the highest density for the sparse case. For each pair of parameters \( \left(m_0, \faktor{ m_2 }{ q(m_1) } \right) \), \( N = 25\) simplicial complexes are generated; triangle weight profile \( w_2(t) \) is given by the following two scenarios:
\begin{enumerate}[label=(\arabic*), leftmargin=*]
      \item indendent triangle weights with bi-modal imbalance, where \( w_2(t) \sim \mc N(1, 1/3) \) for the original triangulation \(t \in \mc K_\Delta \) and \( w_2(t) \sim \mc N(1/2, 1/6) \) otherwise, \Cref{fig:bimodal};
      \item dependent triangle weights through the min-rule: $w_2(t) = \min \left\{\right. w_1(e_1), \allowbreak w_2(e_2), \allowbreak w_3(e_3) \left.\right\} $  for \( t = (e_1, e_2, e_3) \) and edge weights are folded normal variables, \(w_1(e_i) \sim |\mc N| [0, 1]\),  \Cref{fig:minrule}. 
\end{enumerate}
For each weight profile we measure the condition number (\(\kappa_+(C^+ P_\Sigma \Lu 1 P_\Sigma^\top (C^+)^{\top} ) \) vs \( \kappa_+(\Lu 1)\)), \Cref{fig:bimodal,fig:minrule}, left, and the corresponding number of CGLS iteration, \Cref{fig:bimodal,fig:minrule}, right. In the case of the min-rule, we provide a high-performance test for matrices up to \( 10^5 \) in size.

In the case of the independent triangle weights, \Cref{fig:bimodal}, \algname~preconditioning shows gains in \( \kappa_+ \) for the first, sparser, part of the simplicial complexes; conversely, for the min-rule profile induced by the folded normal edges' weights, \Cref{fig:minrule}, developed method outperforms the original system for all tested sparsity patterns \( \faktor{ m_2 }{ q(m_1) } \). Noticeably, \algname~preconditioning performs better in terms of the actual CGLS  iterations, \Cref{fig:bimodal,fig:scheme}, right, than in terms of \(\kappa_+\), and, hence, significantly speeds up the iterative solver for \( \Lu 1 \).

\begin{figure}[hbtp]
      \centering
      \includegraphics[width=1.0\columnwidth]{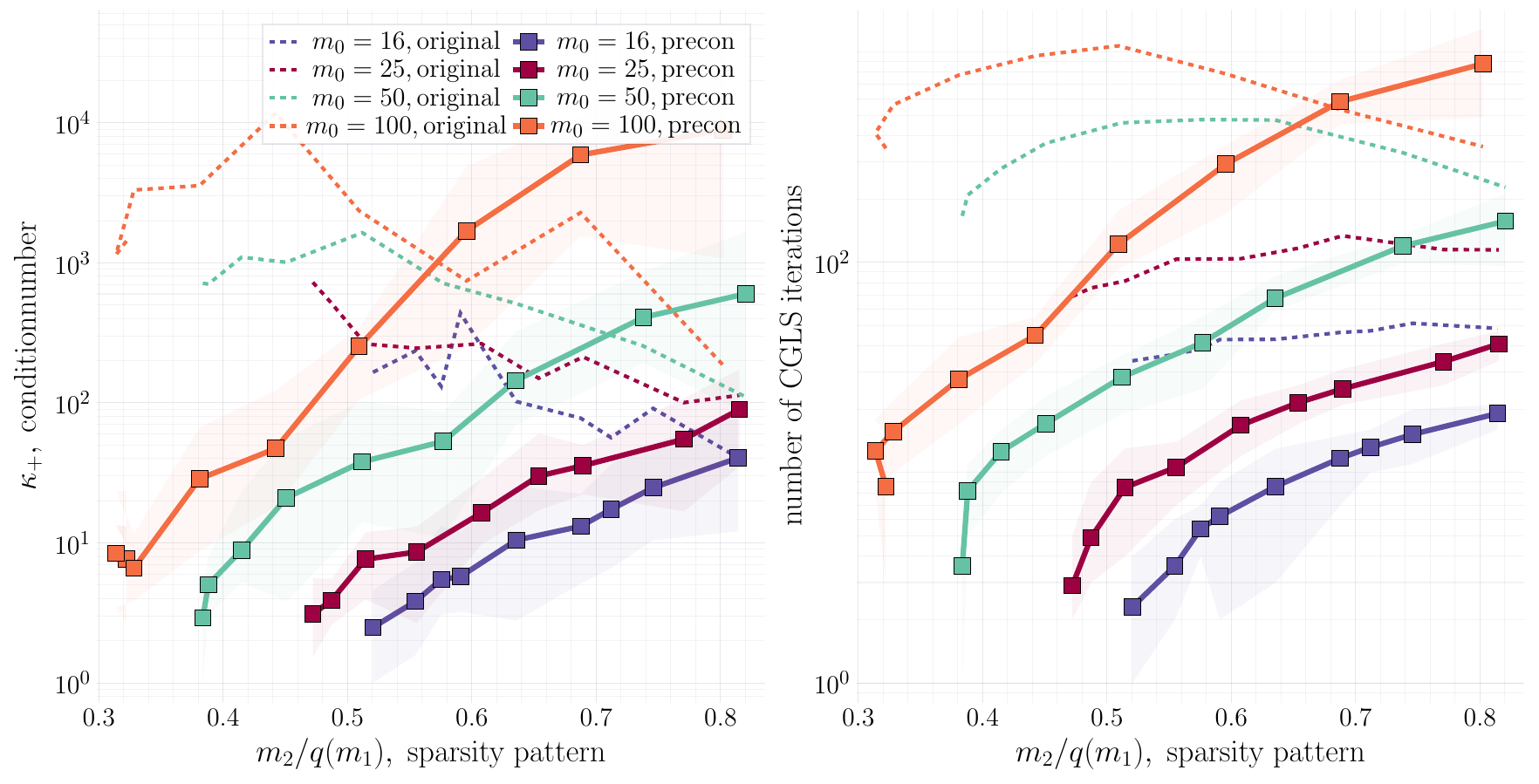}
      \caption{\small Preconditioning quality for enriched triangulations with a varying number of vertices \( m_0 = 16, 25, 50, 100\) and sparsity patterns \( \faktor{ m_2 }{ q(m_1) } \) and independent bi-modal weight profile: condition numbers \( \kappa_+ \) on the left and the number of CGLS iterations on the right. Average results among \( 25 \) generations are shown in solid (\algname) and in dash (original system); colored areas around the solid line show the dispersion among the generated complexes.
            \label{fig:bimodal}
      }
\end{figure}

\begin{figure}[hbtp]
      \centering
      \includegraphics[width=1.0\columnwidth]{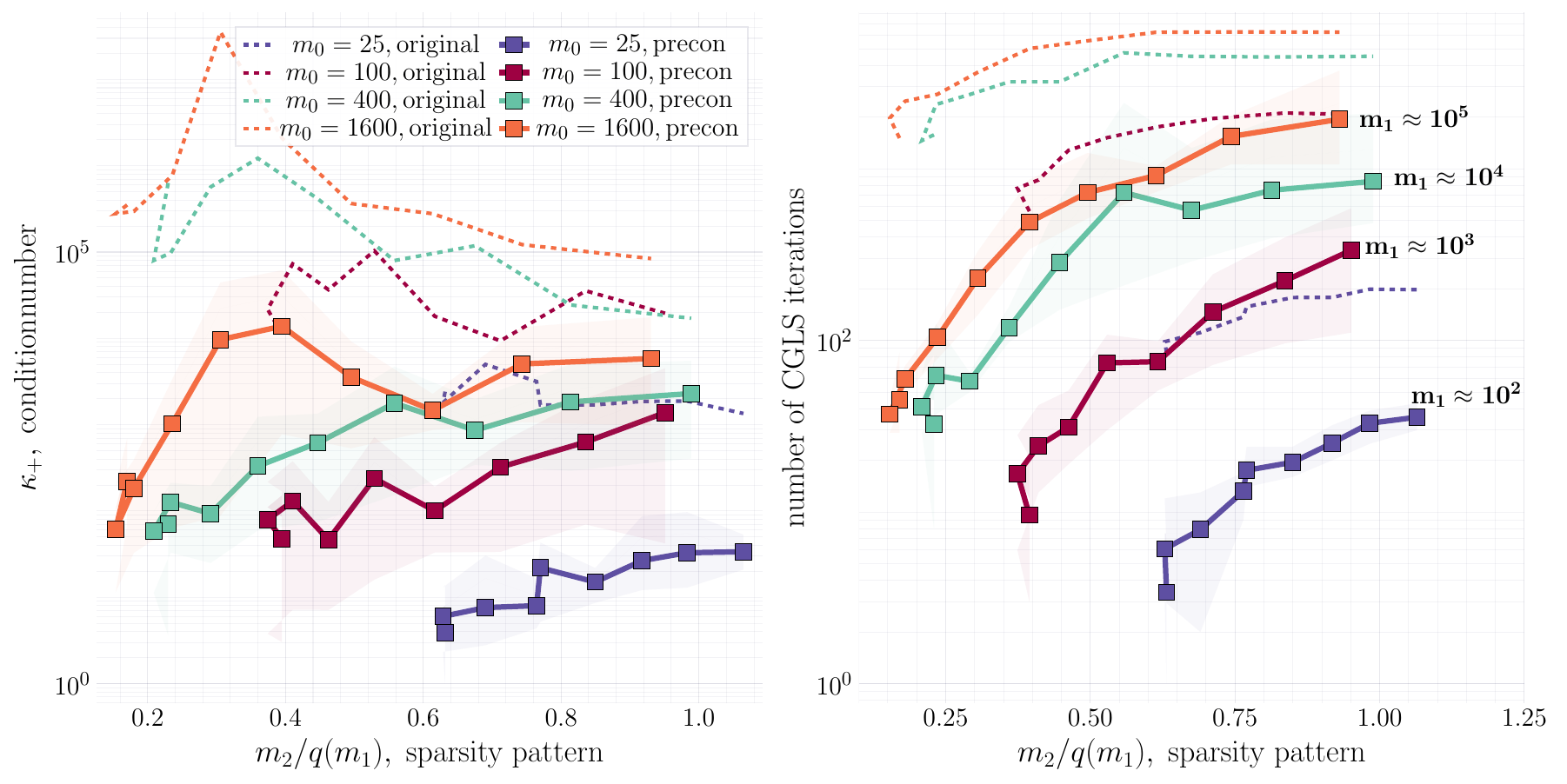}
      \caption{\small Preconditioning quality for enriched triangulations with a varying number of vertices \( m_0 = 25, 100, 400, 1600\) and sparsity patterns \( \faktor{ m_2 }{ q(m_1) } \) and dependent min-rule weight profile with folded normal edge weights: condition numbers \( \kappa_+ \) on the left and the number of CGLS iterations on the right. Average results among \( 25 \) generations are shown in solid (\algname) and in dash (original system); colored areas around the solid line show the dispersion among the generated complexes.           \label{fig:minrule}
      }
\end{figure}

Finally, we demonstrate the comparative performance with the shifted incomplete Cholesky preconditioner, \( C_\alpha \), \Cref{fig:ichol}; here we are forced to guarantee trivial \(0\)- and \(1\)-homologies, so no edges are eliminated in the triangulation, \( d = 0 \), and we check the kernel of \(L_1\) for triviality after the generation of \( \mc K \). Similarly to the previous results, preconditioning with the shifted \texttt{ichol} \( C_\alpha \) is more efficient than \algname~preconditioning for the ``densest part'' of the considered simplicial complexes which means that our developed method still performs better in case of the sparser \(\mc K \). Moreover, the applicability of the shifted \texttt{ichol} is limited to the cases of trivial homologies which is not the case for \algname~preconditioning.

\begin{figure}[hbtp]
      \centering
      \includegraphics[width=1.0\columnwidth]{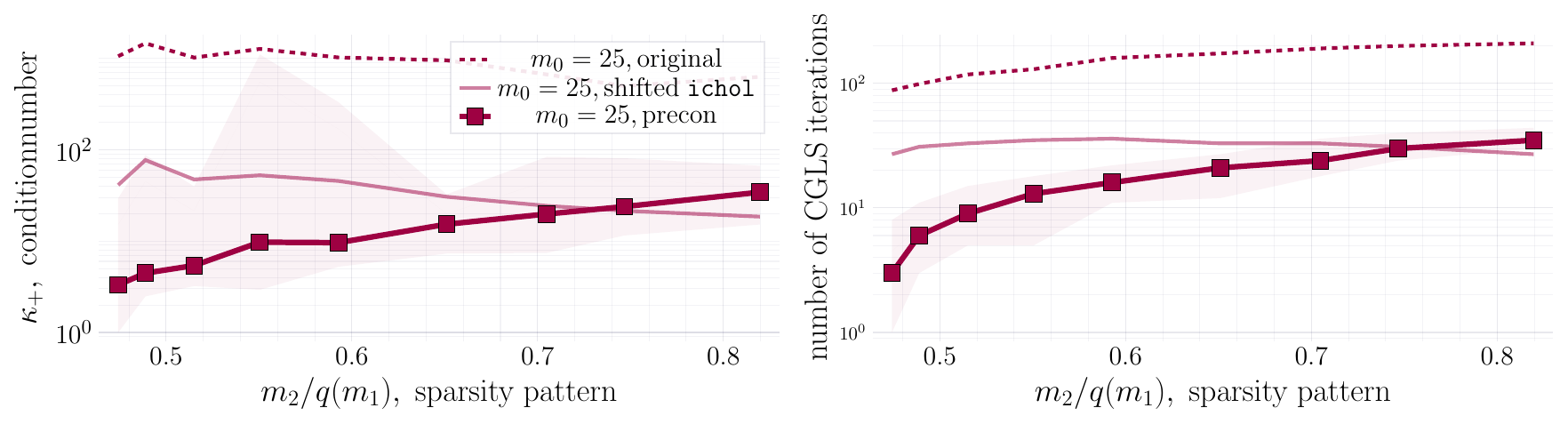}
      \caption{ \small  Comparison of the preconditioning quality between \algname(solid), shifted \texttt{ichol} (semi-transparent) and original system (dashed) for the enriched triangulation on \( m_0 = 25\) vertices and varying sparsity patterns \( \nu \) and dependent min-rule weight profile with uniform edge weights: condition numbers \( \kappa_+ \) on the left and the number of CGLS iterations on the right. Average results among \( 25 \) generations are shown in solid (\algname{} and \texttt{ichol}) and in dash (original system); colored areas around the solid line show the dispersion among the generated complexes.           \label{fig:ichol}
      }
\end{figure}

\subsection*{Acknowledgments}

N.G.\ acknowledges that his research was supported by funds from the Italian 
MUR (Ministero dell'Universit\`a e della Ricerca) within the PRIN 2022 Project ``Advanced numerical methods for time dependent parametric partial 
differential equations with applications'' and the Pro3 joint project entitled
``Calcolo scientifico per le scienze naturali, sociali e applicazioni: sviluppo metodologico e tecnologico''.  
N.G. and F.T. acknowledge support from MUR-PRO3 grant STANDS and PRIN-PNRR grant FIN4GEO. F.T. also acknowledges support from INdAM – GNCS Project, number CUP\_E53C22001930001. The authors are members of INdAM - GNCS which they thank for support.


\end{document}